\documentclass[a4paper,11pt]{article}
\usepackage[latin1]{inputenc}
\usepackage[T1]{fontenc}
\usepackage{amsfonts}
\usepackage{amsmath}
\usepackage{enumerate}
\usepackage{amssymb}
\usepackage{hyperref}
\usepackage{amscd}
\usepackage{amsthm}
\usepackage[left=3cm,right=3cm,top=4cm,bottom=4cm]{geometry}
\usepackage{fontenc}
\usepackage{cancel}
\usepackage[pdftex]{graphicx}
\usepackage{xcolor}
\usepackage{mathrsfs}
\usepackage{color}

\newcommand{\ls}{\leqslant}
\newcommand{\gs}{\geqslant}
\renewcommand{\leq}{\leqslant}
\renewcommand{\leq}{\leqslant}

\newcommand{\R}{\mathbb R}

\newcommand{\scal}[2]{\left \langle #1 \, , \, #2 \right \rangle}
\renewcommand{\div}{\operatorname{div}}

\newcommand{\kg}{\kappa_\varphi}
\renewcommand{\k}{\kappa}

\newcommand{\eps}{\varepsilon}
\newtheorem{theo}{Theorem}
\newtheorem{defi}{Definition}
\newtheorem{prop}{Proposition}

\newtheorem{lem}{Lemma}
\newtheorem{rem}{Remark}
\title{Anisotropic curvature flow of immersed curves}
\date{}

\author{
Gwenael Mercier\footnote{RICAM, 
Johann Radon Institute for Computational and Applied Mathematics, 
Linz, Austria, email: gwenael.mercier@ricam.oeaw.ac.at},
Matteo Novaga\footnote{Dipartimento di Matematica, 
Universit\`a di Pisa, Pisa, Italy, 
e-mail: matteo.novaga@unipi.it},
Paola Pozzi\footnote{Universit\"at Duisburg-Essen,
Fakult\"at f\"ur Mathematik, Essen, Germany,
email: paola.pozzi@uni-due.de}
}

\begin{document}
\maketitle

\begin{abstract}
\noindent We prove short-time existence of $\varphi$-regular solutions to the anisotropic and crystalline curvature flow of immersed planar curves.

\vskip .3truecm \noindent {\bf Keywords:} Geometric evolutions, Crystal growth, Anisotropy.
\vskip.1truecm \noindent {\bf MSC2010}: 53C44, 74E10, 35K55. 
\end{abstract}

\tableofcontents

\section{Introduction}

In this paper we consider the anisotropic curvature motion for planar immersed curves. 
More precisely, given an initial curve $u_0 : S^1 \to \R^2$ and a norm $\varphi$ on $\R^2$,
which we call {\it anisotropy}, 
we show existence of a family $u(t,x)$ of curves satisfying 
the evolution equation
\begin{equation}\label{eqq}
u_{t}^{\perp} = \varphi^\circ(\nu) \k_\varphi \nu,
\end{equation}
where $u_{t}^{\perp}$ is the normal component of the velocity $u_t$,
$\nu$ is the Euclidean normal vector to the support of $u$, 
$\varphi^o$ is the dual norm of $\varphi$,
and $\k_\varphi$ denotes the anisotropic curvature (see Section \ref{sec1}). 
In particular, we prove a short time existence result 
which holds for any anisotropy $\varphi$, 
under a natural regularity assumption on the initial datum.

After the seminal paper by Brakke \cite{brakke78}, the
isotropic mean curvature flow has been intensively studied in the past years
(see for instance \cite{huisken84,ecker89,ecker91,mantegazza} and references therein) 
and the long-time behavior of the flow as well as its singularities are relatively well-understood. 

The first occurrence of anisotropic curvature flow appeared in \cite{angenent90},
where the author shows a short time existence result 
in two dimensions, for smooth and strictly convex anisotropies.
A few years later, in \cite{angenent91,oaks} it was shown that, at the maximal existence time the evolving curve either looses a self-intersection or shrinks to a point.  
In particular, if the initial curve is embedded, then it stays embedded, 
eventually becomes convex and then shrinks to homothetically a point (see \cite{gage94,cz,cz2}).

In the crystalline case, that is, when the anisotropy $\varphi$ is piecewise linear,
equation \eqref{eqq} reduces to a system of ODE's and 
existence and uniqueness of solutions was proved in \cite{GiGu:96}.
Shortly after, in \cite{St}, Stancu proved that a convex curve remains convex and 
shrinks homothetically to a point, while its shape approaches the boundary of the unit ball of $\varphi$,
and in \cite{GG} the authors showed that  an embedded initial curve
becomes eventually convex and then shrinks to a point by the result in \cite{St} (for a classification of the asymptotic behaviour of a polygonal curve when the time approaches the shrinking time see also \cite{Andrews}).

On the other hand few results are available for general anisotropies.
In the embedded case, an existence result analogous to the one in \cite{angenent90}
was established in 
\cite{chambolle13}, by means of an  implicit variational scheme introduced in \cite{ATW} 
in order to define a global weak solution to the anisotropic mean curvature flow.
The consistency of such solution with the solution constructed in \cite{GiGu:96} was
shown in \cite{AT}. 
An important notion in \cite{chambolle13} is that of $\varphi$-regular
flow (see \cite{BeNoPa,BeNo:99} and Definition \ref{defflow} below), which extends the notion of smooth evolution to general anisotropies. 
For such flows a uniqueness result was established in \cite{BeNo:99} (see also \cite{mcmf,GOS}).

Let us mention that very recently an existence and uniqueness result of global weak solutions to 
the anisotropic mean curvature flow in the embedded case, which holds in any dimensions and for general anisotropies, has been proved in \cite{CMP} (see also \cite{GGarch,GP} for a similar result in the crystalline case, in two and three dimensions).

The main purpose of this paper is to extend the existence result in \cite{chambolle13} to immersed curves. In order to do this, we regularize both the anisotropy and the initial curve, so that we can apply the result in \cite{angenent90} and obtain a smooth solution for short time.
Then, we show that we can pass to the limit in the regularization parameter, to obtain an existence result in the general case. We point out that, since the result in  \cite{BeNo:99} only applies to 
embedded solutions, at the moment we do not have a general uniqueness result.

\smallskip 

The plan of the paper is the following: in Section \ref{sec1},
we introduce the notation which  we use in the paper and define  the anisotropic curvature flow for immersed curves. In Section \ref{sec3}, we show that a curve with bounded $\varphi$-curvature
can be approximated by curves with bounded $\varphi_\eps$-curvature, where $\varphi_\eps$'s
are smooth anisotropies converging to $\varphi$ as $\eps\to 0$.
In Section \ref{asecsmooth}, we study the evolution of relevant geometric quantities under the flow, and prove that the curvature must blow up at the first singular time, as it happens in the isotropic case. This provides a uniform bound of the existence time of the approximate flows.
Finally, in Section \ref{secgen} we pass to the limit in the approximate flows and obtain a solution to the original anisotropic curvature motion in an interval $[0,T)$ where $T$ depends only on the initial data.


\section{Notation and preliminary definitions}\label{sec1}

We consider closed planar curves parametrized by $u:S^1 \times [0,T] \to \R^2$, where $S^1$ is identified in the obvious way with $[0, 2\pi]$.
We denote by $s$ the arc-length parameter of the curve (thus $\partial_s (\cdot)=\partial_x (\cdot)/ |u_x|$), by $\tau =u_x/|u_x|=u_s=(\sin \theta, -\cos \theta)$ its unit tangent and $\nu =(\cos \theta, \sin \theta)$ its unit normal.
The Euclidean scalar product in $\R^2$ is denoted by $\cdot$. The symbol $\perp $  stands for anti-clockwise rotation by $\pi/2$, therefore $(a,b)^\perp= (-b, a)$.
 Recall the classical Frenet formulas
\begin{align}
u_{ss}=\tau_s= \vec{\kappa}= \kappa \nu, \qquad \nu_s =-\kappa \tau.
\end{align}
Moreover recall that from the expression for $\nu_s$ one infers that for the scalar curvature $\kappa$ we have
\begin{align}
\kappa=\theta_s.
\end{align}

\subsection{Anisotropies}

\begin{defi}
 We call  \emph{anisotropy} a norm $\varphi: \R^2 \to [0, \infty)$.
We say that $\varphi$  is \emph{smooth}  if $\varphi \in C^{2}(\R^{2} \setminus \{ 0 \})$ and $\varphi$ is  \emph{elliptic} if $\varphi^{2}$ is strongly convex, that is, there exists $C > 0$ such that
\begin{align}\label{ell-cond}
 D^2(\varphi^{2}) \gs C \, \mbox{Id}
 \end{align}
in the distributional sense. 
\end{defi}
\begin{defi}
The set $W_{\varphi}:=\{\varphi \leq 1 \}$ is called  \emph{Wulff shape}. We say that
$\varphi$ is \emph{crystalline} if  $W_{\varphi}$ is a polygon.
\end{defi}
\begin{defi}
 Given an anisotropy $\varphi$, we introduce the \emph{polar norm} $\varphi^\circ$ relative to $\varphi$
 $$\varphi^\circ (x) = \sup\{ \xi \cdot x \, \vert \, \varphi(\xi) \ls 1\}.$$
\end{defi}

\begin{rem}\rm
Note that $\varphi$ is smooth and elliptic if and only if 
$\varphi^\circ$ is smooth and elliptic (\cite[\S~2]{chambolle13}). 

The ellipticity condition implies  that the Wulff shape is uniformly convex. 
Moreover, from \eqref{ell-cond} we infer that 
\begin{align}\label{ell-cond2}
D^{2}\varphi (\nu) \tau  \cdot \tau \geq \widetilde{C}, \qquad \widetilde{C}:=\frac{C}{2 \max  \{\varphi(\tilde{\nu}) \, | \, \tilde{\nu} \in S^{1}  \}},
\end{align}
for  unit vectors $\nu$ and $\tau$ with $\nu \cdot \tau =0$.
Indeed, condition \eqref{ell-cond} implies that $$2 \varphi(\nu) D^{2}\varphi(\nu) \xi \cdot \xi + 2 (D\varphi(\nu) \cdot \xi)^{2} \geq C|\xi|^{2}$$ for any $\xi \in \R^{2}$.  Given  $\tilde{\tau}$, $|\tilde{\tau}|=1$ with $D\varphi(\nu) \cdot \tilde{\tau} =0$, then we can write $\tilde{\tau}= \alpha \tau + \beta \nu$ for some $\alpha, \beta \in [-1,1]$, $\alpha \neq 0$. Then, using the homogeneity properties of $\varphi$ (namely $D\varphi(\xi) \cdot \xi = \varphi(\xi)$, $D^{2} \varphi(\xi) \xi =0$ for $\xi \in \R^{2}$, $\xi \neq 0$), we get
$$ C \leq  2\varphi(\nu) D^{2}\varphi(\nu) \tilde{\tau} \cdot \tilde{\tau}= 2\varphi(\nu)\alpha^{2}D^{2} \varphi(\nu) \tau \cdot \tau \leq  2\varphi(\nu) D^{2} \varphi(\nu) \tau \cdot \tau , $$
from which \eqref{ell-cond2} follows.
 Inequality \eqref{ell-cond2} will be used in Section~\ref{asecsmooth}.
\end{rem}

\subsection{$\varphi$-regular curves and the $RW_{\varphi}$-condition}

Following \cite[\S~2]{chambolle13} we give the following definition.

\begin{defi}\label{def4}
Let R>0. We say that a set $E\subset \R^2$ with non-empty interior satisfies the inner (resp. outer) $RW_{\varphi}$-condition if 
for any $x \in \partial E$, there exists  $y \in \R^{2}$ such that

$$ RW_{\varphi} +y \subseteq \overline{E} \quad (resp.\ RW_{\varphi} +y \subseteq \overline{E^{c}}), 
\qquad \text{ and } \qquad x \in \partial (RW_{\varphi} +y). $$
We also ask that for every $r < R$ and $x \in \R^{2}$, the set $(x+r W_\varphi) \cap E^c$ (resp. $(x+rW_\varphi) \cap E$) is connected. We say that a set $E$ satisfies the $RW_{\varphi}$-condition if
it satisfies both the inner and the outer $RW_{\varphi}$-condition.
\end{defi}
\begin{rem}\rm As observed in \cite[Remark~1]{chambolle13} (and using the connectedness assumption), if $E$ satisfies the $RW_{\varphi}$-condition for some $R>0$, then $\partial E$ is locally a Lipschitz graph. In particular a pathological set as depicted in \cite[Fig. 1]{chambolle13} cannot occur. Moreover if in addition $\varphi^\circ$ and $\varphi$ are smooth, then $\partial E$ is of class $C^{1,1}$ and $|\kappa_{\varphi}| \leq 1/R$ a.e. on $\partial E$.
\end{rem}

Since we will be working with immersed curves we need a localized version of the $RW_{\varphi}$-condition. 

\begin{defi}\label{deflocal}
Let $u$ be a closed curve which is locally a Lipschitz graph, that is,
for any $x\in S^{1}$ the image of $u$ coincides with the graph of a function $f_{x}$
in a neighborhood of $u(x)$.
We say that $u$ satisfies locally the inner (reps. outer) $RW_{\varphi}$-condition if
for any $x \in S^{1}$, there exist $y \in \R^{2}$ and $\rho>0$ such that
$ u(x) \in \partial (RW_{\varphi} +y)$ and 
$$ (RW_{\varphi} +y)\cap B_{\rho}(u(x)) \subseteq S^{-} 
\qquad (resp.\ (RW_{\varphi} +y)\cap B_{\rho}(u(x)) \subseteq S^{+}),$$
where $S^{-}$ and $S^{+}$ denote respectively the subgraph and the supergraph of 
the function $f_{x}$. 
We say that $u$ satisfies locally the $RW_{\varphi}$-condition if
it locally satisfies both the inner and the outer $RW_{\varphi}$-condition.
\end{defi}

Note that the above definition is weaker than Definition~\ref{def4}, in the sense that in the crystalline case information about the  curvature  gets lost. For instance if  $W_{\varphi} = \{ x \in \R^{2}, \| x \|_{L^{\infty}} \leq 1\}$ is a unit square, then  the set $E=\frac{1}{2} W_{\varphi}$ satisfies locally the $RW_{\varphi}$-condition for any positive $R$. 
This ``inconsistency'' is due to the fact that in the crystalline case the definition of  curvature (see Definition~\ref{def3} below) is no longer a  local concept. To retain information about the curvature, the geometrical arguments used  in Section~\ref{sec3} will use additional knowledge on the length of the flat sides.


As initial data for the anisotropic curve shortening flow we will consider immersed curves that 
admit a sufficiently regular anisotropic normal vector field (and hence locally satisfies a $RW_{\varphi}$-condition, by Lemma~\ref{lemmauno} below). More precisely, we give the following definition.

\begin{defi}\label{def3}
Let $u$ be a closed curve in $\R^2$ and let $s \in [0,L]$ denote its arc-length parameter. 
We say that $u$ is \emph{$\varphi$-regular} if the image of $u$ is 
locally a Lipschitz graph, and
there exists a Lipschitz  vector field  $n:[0,L] \to \R^{2}$, 
usually called Cahn-Hoffman vector field (see \cite{Ta:78}),
such that
$$ \quad n(s) \in \partial \varphi^\circ (\nu(s)) \text{ a.e. in }  [0,L],$$
where $\partial \varphi^\circ$ denotes the sub-differential of the anisotropy $\varphi^\circ$. 
We define the (scalar) \emph{anisotropic curvature} of $u$ by
$$ \kappa_\varphi  :=- N_{s} \cdot \tau,$$
where N is the Cahn-Hoffman vector field that minimizes the energy
$$\mathcal{K}(n)= \int_0^{L} (n_{s} \cdot \tau)^{2} \varphi^{\circ}(\nu) ds.$$
\end{defi}

Notice  that since a $\varphi$-regular curve is locally a Lipschitz graph its anisotropic curvature $\kappa_{\varphi}$ is well-defined almost everywhere in $s$. On the other hand different choices of Cahn-Hoffman vector fields are in general possible.
In the following we will always write $N$ when referring to the particular choice of Cahn-Hoffman vector field that realizes the minimum for the functional $\mathcal{K}$.
Finally recall that $  \kappa_\varphi  =- N_{s} \cdot \tau =-\div N$ and that the Euclidean tangential divergence coincides with the divergence of an extension for $N$ (\cite[Lemma~4.5]{bellettini01}).

\begin{rem}\label{rem3}\rm
If $u$ is smooth and $\varphi$ is smooth and elliptic, then $\kappa_{\varphi} =(D^{2} \varphi^\circ (\nu) \tau \cdot \tau) \kappa$ and the anisotropic curvature is controlled (from above and below) by the classical curvature $\kappa$, thanks to \eqref{ell-cond2}. 

If $u$ is piecewise linear and $\varphi$ is crystalline, then $\kappa_{\varphi}$ is no longer a local quantity. 
Indeed the curvature  of an edge $F \subset u(S^{1})$ of length $l$, parallel to an edge
 of the Wulff shape, is given by the expression
\begin{align*}
\kappa_{\varphi}^{F}=\delta_{F} \frac{l_{W_{\varphi}}}{l}
\end{align*}
where $l_{W_{\varphi}}$ is the length of the corresponding edge of the Wulff shape, and $\delta_{F} \in \{ 0, \pm1 \}$ is a local convexity factor. In particular,
if $0<|\kappa_{\varphi}| \leq C$ then $ l \geq l_{W_{\varphi}}/C$, that is,
$l$ must be long at least as the corresponding edge of the rescaled Wulff-shape $\frac{1}{C} W_{\varphi}$.
\end{rem}

Below we will use the following facts.
\begin{lem}\label{lemmauno}
Let $u$ be a $\varphi$-regular curve and let
$$F(s,d) := u(s) + d n(s),  \qquad n(s,d):=n(s)\qquad s \in [0, L].$$
Then, the curve $F(\cdot,d)$ is $\varphi$-regular with 
Cahn-Hoffman vector field $n(\cdot,d)$
for all $|d|<\Vert n_s \Vert_\infty$.
Moreover $u$ satisfies locally the $RW_\varphi$-condition with $R=\frac 1{\Vert n_s \Vert_\infty}$. 

For the particular choice of $n(s)=N(s)$ we have in addition that  $n(s,d)=N(s,d)$ and $\Vert N_s \Vert_\infty= \| \kappa_{\varphi} \|_{\infty}$.
\end{lem}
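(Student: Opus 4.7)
The plan is to show first that for any Lipschitz Cahn--Hoffman field $n$ along $u$, the derivative $n_s$ is almost everywhere parallel to the tangent $\tau$. Once this is established, the computation of $F_s$ forces $F(\cdot,d)$ to have the same Euclidean tangent and normal as $u$, and $n(\cdot,d)\equiv n(\cdot)$ is automatically admissible as a Cahn--Hoffman field for $F(\cdot,d)$; the $RW_\varphi$-condition at radius $R=1/\|n_s\|_\infty$ is then read off from the parallel-curve construction, and the refinement for $n=N$ is an immediate corollary.

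For the key identity $n_s\cdot\nu=0$ a.e., fix $s_0$ at which $n$ is differentiable, $\nu(s_0)$ exists, and $n(s_0)\in\partial\varphi^\circ(\nu(s_0))$; this holds on a full-measure set. Since $n$ is Lipschitz and $\varphi$ is continuous, the a.e.\ constraint $\varphi(n(s))\leq1$ extends to every $s\in[0,L]$, so that the duality estimate $n(s)\cdot\xi\leq\varphi^\circ(\xi)$ is valid everywhere. Choosing $\xi=\nu(s_0)$, the function $g(s):=n(s)\cdot\nu(s_0)-\varphi^\circ(\nu(s_0))$ is $\leq 0$ on $[0,L]$ with $g(s_0)=0$, so $s_0$ is a maximum of $g$ and differentiability at $s_0$ forces $g'(s_0)=n_s(s_0)\cdot\nu(s_0)=0$. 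Writing $\kappa^n:=-n_s\cdot\tau$ (which coincides with $\kappa_\varphi$ when $n=N$), we obtain $n_s=-\kappa^n\tau$ a.e.\ and
\[
F_s=u_s+d\,n_s=(1-d\kappa^n)\,\tau.
\]
For $|d|<1/\|n_s\|_\infty$ the factor $(1-d\kappa^n)$ is positive, hence $F_s\parallel\tau$; in particular $\nu_{F(\cdot,d)}=\nu$, and the inclusion $n(s,d):=n(s)\in\partial\varphi^\circ(\nu_{F(\cdot,d)}(s))$ makes $n(\cdot,d)$ a valid Lipschitz Cahn--Hoffman field for $F(\cdot,d)$. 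The local Lipschitz-graph property transfers from $u$ to $F(\cdot,d)$ in the same coordinate frame since both curves share the same tangent direction.

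For the local $RW_\varphi$-condition at $u(s_0)$, observe first that $n(s_0)\in\partial\varphi^\circ(\nu(s_0))$ together with $\varphi^\circ(\nu(s_0))>0$ forces $\varphi(n(s_0))=1$; thus with $y^\pm:=u(s_0)\pm R\,n(s_0)$ one has $\varphi(u(s_0)-y^\pm)=R$, i.e.\ $u(s_0)\in\partial(RW_\varphi+y^\pm)$. The Jacobian of $(s,d)\mapsto F(s,d)$ equals $(1-d\kappa^n)\,\varphi^\circ(\nu)>0$, so $F$ is a local diffeomorphism that foliates a neighborhood of $u(s_0)$ by the parallel curves. Writing any nearby point as $F(s,d)$, convexity and $1$-homogeneity of $\varphi$ combined with the Lipschitz regularity of $u$ and $n$ and the inclusion $\nu(s_0)/\varphi^\circ(\nu(s_0))\in\partial\varphi(n(s_0))$ yield
\[
\varphi\bigl(F(s,d)-y^+\bigr)\geq R-d-C(s-s_0)^2,
\]
where the first-order term in $(s-s_0)$ cancels because $\tau\perp\nu$. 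For $d<0$ this exceeds $R$ on $B_\rho(u(s_0))$ with $\rho$ small enough, so $(RW_\varphi+y^+)\cap B_\rho(u(s_0))$ lies in the supergraph of $f_{s_0}$, giving the outer condition. The inner condition at $y^-$ is treated symmetrically.

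Finally, for $n=N$ the identity $N_s\perp\nu$ gives $|N_s|=|N_s\cdot\tau|=|\kappa_\varphi|$ a.e., whence $\|N_s\|_\infty=\|\kappa_\varphi\|_\infty$. Since $\nu_{F(\cdot,d)}=\nu$, the admissible Cahn--Hoffman fields for $F(\cdot,d)$ coincide pointwise with those for $u$; a change of variables in $\mathcal{K}$ shows that the minimizer of the anisotropic energy on $F(\cdot,d)$ pulls back to $N$ on $u$, yielding $N(\cdot,d)=N(\cdot)$. The main technical obstacle is the maximality argument establishing $n_s\cdot\nu=0$ under only Lipschitz regularity of $n$ and an a.e.\ subdifferential constraint; the companion $\varphi$-distance estimate in the $RW_\varphi$-argument is the other delicate point, especially in the crystalline case where one must verify that the Wulff shape of radius $R$ genuinely sits on the correct side of $u$ rather than tangentially crossing it.
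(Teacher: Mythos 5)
Your first step --- deriving $n_s\cdot\nu=0$ a.e.\ by noting that $g(\sigma):=n(\sigma)\cdot\nu(s_0)-\varphi^\circ(\nu(s_0))$ is nonpositive (since $\varphi(n)\le 1$ everywhere by continuity) and attains its maximum at $\sigma=s_0$ --- is correct and is a genuinely different, arguably cleaner, route than the paper's, which instead differentiates the constraint $\varphi^2(n(s))=1$ using Clarke's chain rule for Lipschitz maps. The computation of $DF$ and the nondegeneracy of its determinant for $|d|<1/\|n_s\|_\infty$ then match the paper's argument (which invokes the Lipschitz inverse function theorem to get that $F$ is locally \emph{bi}-Lipschitz, a point you should make explicit rather than just asserting that the graph property ``transfers'').

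The genuine gap is in your verification of the local $RW_\varphi$-condition. The inequality you claim,
\[
\varphi\bigl(F(s,d)-y^+\bigr)\;\ge\;R-d-C(s-s_0)^2,
\]
is neither justified nor sufficient. It is not justified because the ``cancellation of the first-order term'' is a Taylor-expansion argument that requires more regularity than is available: for a general $\varphi$-regular curve (e.g.\ in the crystalline case) $\nu$ may jump, $u$ is only Lipschitz, and the naive duality bound $\varphi(\xi)\ge \xi\cdot\eta/\varphi^\circ(\eta)$ with the single dual vector $\eta=\nu(s_0)$ produces an error of order $|s-s_0|$, not $(s-s_0)^2$; moreover the sign of the second-order contribution is exactly the content of the curvature bound $|\kappa^n|\le 1/R$ and cannot be absorbed into a generic constant $C$. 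It is not sufficient because the $RW_\varphi$-condition requires the Wulff shape to lie entirely on one side of the graph: from $\varphi(F(s,d)-y^+)\le R$ and your estimate you only get $d\ge -C(s-s_0)^2$, which leaves room for points of $RW_\varphi+y^+$ strictly below the graph, so the containment $(RW_\varphi+y^+)\cap B_\rho\subseteq S^+$ does not follow. What is actually needed is the exact inequality $\varphi(F(s,d)-F(s_0,R))\ge R-d$, and the mechanism behind it is not a pointwise expansion but the fact that, once $F$ is a local bi-Lipschitz homeomorphism, the function $\tilde d(x):=d$ (for $x=F(s,d)$) satisfies the eikonal equation $\varphi^\circ(\nabla\tilde d)=1$ a.e.\ and is therefore $1$-Lipschitz for the $\varphi$-distance; this is precisely \cite[Lemma~3.4]{bellettini01}, which the paper cites and localizes to the immersed setting. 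Separately, your final claim that the minimizer of $\mathcal K$ ``pulls back'' under the change of variables is imprecise, since the arc-length element of $F(\cdot,d)$ introduces the non-constant weight $(1-d\kappa^n)^{-1}$; the paper's argument works because minimality is only at issue on flat segments, where this weight is constant and the linear interpolant minimizes regardless.
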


\begin{proof}
First of all notice that
$n_{s}$ is a tangential vector and therefore $n_{s}= (n_{s} \cdot \tau) \tau$ (in particular for the Cahn-Hoffman vector field $N$ we infer that 
$N_{s}= (N_{s} \cdot \tau) \tau =-\kappa_{\varphi} \tau$ and hence $\Vert N_s \Vert_\infty= \| \kappa_{\varphi} \|_{\infty}$).
The fact that $n_{s}$ is always parallel to $\tau$ is a consequence of $\varphi^2 (n) = 1$. Indeed, this implies that $\partial_s (\varphi^2 (n(s))) = 0$. 
Note that thanks to \cite[Th. 2.3.10]{clarke90}, 
if $\partial_s n \neq 0$, then $\partial_s(\varphi^2 (n(s))) =  \partial (\varphi^2)(n(s))\cdot\partial_s n(s)$, where $\partial$ denotes the subdifferential. 
Recalling that 
$$
\frac{\nu(s)}{\varphi^{\circ} (\nu(s))}\in
\frac{1}{2} \partial (\varphi^2)(n(s)),
$$ 
we then get
$$ \nu \cdot \partial_s n = 0,$$
this equality holding also when $\partial_s n = 0$.

The thesis now follows by the argument in  \cite[Lemma 3.3 and Lemma 3.4]{bellettini01}. Indeed, we first prove that $F$ is locally bi-Lipschitz from $[0, L] \times (-1/C, 1/C)$ onto its image in $\R^{2}$, where $C= \| n_{s} \|_{\infty}$.
To this aim, we follow \cite[Lemma 3.3]{bellettini01} and apply the inverse function theorem in a Lipschitz framework (cf. \cite[Th. 7.1.1]{clarke90}). We have to show that, if $F$ is differentiable at $(s_n,d_n)$,  $(s_n,d_n) \to (s,d)$, and there is a limit of $DF (s_n,d_n)$, then the limit  is nonsingular. 
To see this, we compute
$$DF(s_n,d_n) = \left( \tau(s_n) + d_n \partial_s n(s_n), n(s_n) \right)= \left( [1+d_n (n_{s}(s_{n}) \cdot \tau (s_{n})) ]\tau(s_n) , n(s_n) \right).$$
Therefore $$|\det DF(s_{n},d_{n})|= |1+d_n (n_{s}(s_{n}) \cdot \tau (s_{n}))| |n(s_{n}) \cdot \tau^{\perp}(s_{n})| = \varphi^{\circ}(\nu(s_{n})) |1+d_n (n_{s}(s_{n}) \cdot \tau (s_{n}))|.$$

This shows that as long as $|d|$ is smaller than $1/C$, the determinant cannot degenerate. 
Moreover, since $n(s_{n}) \to n(s)$, we have  that convex combinations of the above limits are still matrices with full rank; hence 
 the implicit function theorem applies. 
It follows that $F$ is locally bi-Lipschitz and 
$F(\cdot,d)$ is $\varphi$-regular for every $|d|<1/C$.

Notice also that $\varphi(F(s,d)-u(s))=d$. To conclude that 
$u$ satisfies locally the $RW_\varphi$-condition with $R=1/C$,
it is enough to observe that $\varphi(F(s,t)-u(s'))\ge d$ for $s'$ in a neighborhood of $s$.
This is proven \cite[Lemma 3.4]{bellettini01} under the assumption that $u$ is an embedding,
but it also applies to our case as we can replace $u$ with a curve $\hat u$ such that
$\hat u$ is a
$\varphi$-regular embedding that coincides with $u$ in a neighborhood of $s$, with
$\| \hat n_{s} \|_{\infty} \leq C$.

Finally observe that if $n(s)=N(s)$ then $N(\cdot,d)$ is by construction the Cahn-Hoffman vector field that minimizes $\mathcal{K}$ as in Definition~\ref{def3}. (In principle this must be only verified on flat segments of $u$ where the curvature is not zero: here $N(\cdot)$ linearly interpolates between the values of $N$ at the end of the segment (as discussed for instance \cite[Ex. 7.1]{Be04}) and the same holds for $N(\cdot, d)$ by definition.)
\end{proof}

\begin{rem}\label{remd} \rm
Notice that, letting $(s(x),d(x))$ be a local inverse of $F$ in a neighborhood $U$ of $u(\bar s)$,
and letting $\widetilde n(x) = n(s(x))$, for $x\in U$
there holds 
\begin{equation}\label{eqd}
\widetilde n(x)\in \partial \varphi^\circ (\nabla d(x)),
\qquad \div \widetilde n(x) = \div n(s(x)) + O(d(x)).
\end{equation}
Notice that $d$ is a local $\varphi$-distance function and $s$ is a local projection function
on the support of the curve $u$ in the neighborhood $U$,
thus $\nabla d(x)= \frac{\nu(s(x))}{\varphi^{\circ}(\nu(s(x)))}$ and $\varphi^{\circ}(\nabla d(x)) =1$.
\end{rem}


In the case of smooth and elliptic anisotropies a stronger statement holds.
\begin{lem}\label{lemmadue}
Let $u$ be a closed curve which is locally a Lipschitz graph.
 Let $\varphi$ (and $\varphi^{\circ}$) be smooth and elliptic. The curve $u$ is $\varphi$-regular if and only if it locally satisfies the $RW_\varphi$-condition for some $R>0$.
 Moreover we have $R=\frac 1{\Vert N_s \Vert_\infty}$.
\end{lem}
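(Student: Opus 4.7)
The plan is to handle the two implications separately, relying heavily on Lemma~\ref{lemmauno} for one direction and on a sandwich/comparison argument for the other.

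The implication ``$\varphi$-regular $\Rightarrow$ locally $RW_\varphi$'', together with the bound $R \geq 1/\Vert N_s\Vert_\infty$, is already contained in Lemma~\ref{lemmauno}: indeed, choosing the minimizing Cahn-Hoffman field $N$ in Definition~\ref{def3}, Lemma~\ref{lemmauno} shows precisely that $u$ locally satisfies the $RW_\varphi$-condition with $R = 1/\Vert N_s\Vert_\infty$. No extra work is needed here.

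For the converse, assume $u$ locally satisfies the $RW_\varphi$-condition for some $R>0$. The crucial point is that, because $\varphi$ and $\varphi^\circ$ are smooth and elliptic, $\partial W_\varphi$ is of class $C^2$ with strictly positive Euclidean curvature bounded above and below, so $\partial(RW_\varphi+y)$ is $C^{1,1}$ with Euclidean curvature $\leq c/R$. At each $\bar s\in S^1$ we dispose of an inner and an outer translate of $RW_\varphi$ that are tangent to $u$ at $u(\bar s)$ and sandwich $u$ on both sides in a neighbourhood. A standard ``two-sided ball'' argument (working in the Lipschitz graph representation of $u$ near $u(\bar s)$) then forces the graph to be $C^{1,1}$ with Lipschitz constant for the derivative controlled by $c/R$. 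In particular the Euclidean unit normal $\nu(s)$ is Lipschitz in $s$.

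Once we know $\nu\in \mathrm{Lip}$, define $n(s) := D\varphi^\circ(\nu(s))$. Since $\varphi^\circ\in C^2(\R^2\setminus\{0\})$ and $\nu$ takes values on the unit circle (so stays away from $0$), $D\varphi^\circ$ is Lipschitz on the image of $\nu$, hence $n$ is a Lipschitz vector field along $u$. As $\varphi^\circ$ is differentiable at $\nu(s)$, the subdifferential $\partial \varphi^\circ(\nu(s))$ is the singleton $\{D\varphi^\circ(\nu(s))\}=\{n(s)\}$, so $n(s)\in\partial\varphi^\circ(\nu(s))$ for every $s$. This proves that $u$ is $\varphi$-regular.

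For the remaining equality $R = 1/\Vert N_s\Vert_\infty$, only the bound $\Vert N_s\Vert_\infty\leq 1/R$ is still missing. It is obtained by comparison: at any point $u(\bar s)$ where $\kappa_\varphi$ is defined, the inner Wulff ball $RW_\varphi+y^-$ is tangent to $u$ at $u(\bar s)$ from the inside; applying Remark~\ref{rem3} (or equivalently the formula $\kappa_\varphi = (D^2\varphi^\circ(\nu)\tau\cdot\tau)\,\kappa$) to both $u$ and the ball, which has constant anisotropic curvature $1/R$, yields $|\kappa_\varphi(\bar s)|\leq 1/R$. Since $N_s = -\kappa_\varphi\,\tau$ gives $\Vert N_s\Vert_\infty=\Vert \kappa_\varphi\Vert_\infty$, the desired bound follows. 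The main subtle point is the $C^{1,1}$ regularity obtained from the two-sided $RW_\varphi$ contact, which is the step where the smoothness and ellipticity of $\varphi$ are actually used; once this is in place, both the definition of $n$ and the curvature comparison are routine.
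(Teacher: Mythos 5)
Your proposal is correct and follows essentially the same route as the paper: the forward implication is delegated to Lemma~\ref{lemmauno}, and the converse uses the two-sided Wulff-shape contact to get $C^{1,1}$ regularity of the curve, hence Lipschitz continuity of $\nu$, so that $N:=D\varphi^\circ(\nu)$ is the (unique, since $\varphi^\circ$ is differentiable) Cahn--Hoffman field. You additionally spell out the sandwich argument for the $C^{1,1}$ step and the curvature comparison giving $\Vert N_s\Vert_\infty\le 1/R$, both of which the paper leaves implicit (the latter being the content of the remark following Definition~\ref{def4}); these additions are accurate and consistent with the intended argument.
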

\begin{proof}
One  part of the statement follows from Lemma~\ref{lemmauno}.
On the other hand, suppose now that $u$  satisfies locally the $RW_{\varphi}$-condition. Then (thanks to the regularity and ellipticity of $\varphi$), the curve is $\mathcal C^{1,1}$ which means that the Euclidian normal vector $\nu$ is Lipschitz with respect to the arc-length $s$. Then, since $\varphi^\circ$ is regular, the vector field
$$ N := D \varphi^\circ(\nu)$$
is also Lipschitz with respect to $s$ and is therefore the Cahn-Hoffman vector field.
\end{proof}

\subsection{$\varphi$-regular flows}
\begin{defi}\label{defflow}
We say that a family of curves $u:S^1\times [0,T]\to \R^2$ is a
$\varphi$-regular flow if

\begin{itemize}
\item $u$ is Lipschitz continuous,
\item $u(\cdot,t)$ is $\varphi$-regular for every $t\in [0,T]$, and there exists a family of Cahn-Hoffman vector fields $n(\cdot,t)$ 
for $u(\cdot,t)$ such that $\|n_{s} (\cdot, t)\|_\infty\le C$ for every 
$t\in [0,T]$,
\item for almost every $(x,t)$ there holds
\begin{align}\label{eqn}
u_{t}\cdot\nu = -(\div n)\, n\cdot\nu\,.
\end{align}
\end{itemize}

\end{defi}
\noindent It is easy to check that the Wulff shape shrinks self-similarly under \eqref{eqn}
(see \cite{gage94,St}).

\begin{rem}\label{remdflow}
Given a $\varphi$-regular flow $u$
we can define the functions $F(s,d,t),n(s,d,t)$,
as in Lemma \ref{lemmauno}.
We let $(s(x,t),d(x,t))$ be the inverse of $F$ in a (space-time) neighborhood $V$ of $u(\bar s,\bar t)$,
and we let $\widetilde n(x,t)=n(s(x,t),t)\in \partial \varphi^\circ(\nabla d)$. 
By \eqref{eqd} for $(x,t)\in V$ we get 
$$
\div \widetilde n(x,t) = \div n(s(x,t),t) + O(d(x,t))\,,
$$
moreover, using the equality $x=u(s(x,t),t) + d(x,t) n(s(x,t),t)$ and assuming that the curve $u(S^{1}, t)$ moves in direction $n$, one can show  
$$
d_t(x,t) = -u_t(s(x,t),t)\cdot \frac{\nu(s(x,t),t)}{\varphi^\circ(\nu(s(x,t),t))}\,.
$$ 
As a consequence,
in analogy to  \cite[Definition 2]{chambolle13}), \eqref{eqn}
is equivalent to 
\begin{align}\label{eqnd}
d_{t} = \div \widetilde n + O(d)
\qquad a.e.\,in\, V.
\end{align}
\end{rem}

When $u$ is smooth and the anisotropy $\varphi$ is smooth and elliptic,
the classical formulation of the anisotropic curvature flow (see \cite{angenent90}) 
is given by the equation 
\begin{align}\label{acsf}
u_t= \varphi^\circ(\nu) \kappa_\varphi \nu.
\end{align}
Notice that a solution $u$ of   \eqref{acsf} also satisfies \eqref{eqn}.
By setting
\begin{align}
  \phi (\theta):=\varphi^\circ(\nu)=\varphi^\circ(\cos \theta, \sin \theta),
\end{align}
a straightforward calculation gives
\begin{align}
\label{a1}
\phi(\theta)+ \phi''(\theta)=D^2 \varphi^\circ(\nu) \tau \cdot \tau ,
\end{align}
so that we can rewrite the  flow \eqref{acsf} as
\begin{align}
\label{ACSFF}
u_t=\phi(\theta)(\phi(\theta)+ \phi''(\theta)) \,\kappa \nu= \psi (\theta) \kappa \nu ,
\end{align}
where $\kappa$ is the Euclidean curvature and
\begin{align} \label{defpsi}
\psi(\theta):=\phi(\theta)(\phi(\theta)+ \phi''(\theta)) .
\end{align}
Note that by \eqref{ell-cond2}, the ellipticity of $\varphi$ implies that 
 $\psi \geq \tilde{C}>0$.

\section{Approximation of $\varphi$-regular curves}
\label{sec3}

In \cite[Lemma~3]{BCCN} (see also \cite[Remark~3]{chambolle13}) it is shown that given an anisotropy $\varphi$, it is possible to construct a sequence of anisotropies $\varphi_{\eps}$, such that  $\varphi_{\eps}$ and $\varphi_{\eps}^{\circ}$ are elliptic and belong to $C^{\infty}(\R^{2} \setminus \{0 \})$, and such that $\varphi_{\eps}$ converges uniformly to $\varphi$ on compact subsets of $\R^{2}$ as $\eps \to 0$. 

In this section we approximate a $\varphi$-regular curve $u$ by a family $u_{\eps}$
of smooth $\varphi_{\eps}$-regular curves, 
where $\varphi_{\eps} \gs \varphi$ is a family of $C^{\infty}(\R^{2} \setminus \{0 \})$   elliptic anisotropies approximating $\varphi$.

\begin{lem}\label{lem1}
Let $u$ be a $\varphi$-regular curve with $|\kappa_{\varphi}| \leq C$. Then, for a given sequence  $\varphi_\eps \to \varphi$ with $\varphi_\eps \in C^{\infty}(\R^{2} \setminus \{0 \})$,  elliptic and  $\varphi \leq \varphi_\eps$ (which is equivalent to $W_{\varphi_{\eps}} \subset  W_{\varphi}$), 
there exists a sequence of 
 curves $u_{\eps}$ of class $C^\infty$, such that $u_{\eps} \to u$ uniformly as $\eps \to 0$ and  $|\kappa_{\varphi_\eps}| \ls C'$ uniformly in $\eps$, where $C' >C$ is arbitrarily close to $C$.
\end{lem}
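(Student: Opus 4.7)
The strategy is to apply Lemma~\ref{lemmadue}: for smooth elliptic $\varphi_\eps$, the bound $|\kappa_{\varphi_\eps}| \le C'$ is equivalent to the local $R'W_{\varphi_\eps}$-condition with $R' = 1/C'$. Thus the task reduces to constructing smooth curves $u_\eps \to u$ locally satisfying the $R'W_{\varphi_\eps}$-condition with $R' < R := 1/C$ and $R'$ arbitrarily close to $R$. By Lemma~\ref{lemmauno}, since $\|N_s\|_\infty = \|\kappa_\varphi\|_\infty \le C$, the curve $u$ itself satisfies locally the $RW_\varphi$-condition, so some room is available.

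The construction proceeds via the tubular-neighborhood structure of Lemma~\ref{lemmauno}: the map $F(s,d) = u(s) + dN(s)$ is bi-Lipschitz on $[0,L] \times (-R,R)$, and by Remark~\ref{remd} the projection onto the curve yields a local signed $\varphi$-distance $d(x)$ with $\varphi^\circ(\nabla d) = 1$. I would mollify at scale $\sigma = \sigma(\eps) \to 0$ (either the arc-length parametrization of $u$, or equivalently $d$) and take $u_\eps$ as a smooth level set: the implicit function theorem yields smoothness for $\sigma$ small enough, and uniform closeness $u_\eps \to u$ is immediate. A partition of unity along a finite cover of $u(S^1)$ by tubular charts assembles the local pieces into a globally smooth closed curve. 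To verify the $R'W_{\varphi_\eps}$-condition for $u_\eps$, note that $\varphi \le \varphi_\eps$ together with uniform convergence yields $W_\varphi \subseteq (1+\delta_\eps) W_{\varphi_\eps}$ with $\delta_\eps \to 0$; this, combined with the convexity of $W_{\varphi_\eps}$, implies that any inscribed $RW_\varphi$-ball at $x \in u$ contains a tangent $R'W_{\varphi_\eps}$-ball with matching outward normal once $R'(1+\delta_\eps) \le R$. The mollification perturbs normal directions by $O(\sigma)$, so choosing $\sigma$ much smaller than the gap $R - R'(1+\delta_\eps)$ preserves the inscribed-ball condition for $u_\eps$.

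The main obstacle is the crystalline case (Remark~\ref{rem3}), where $u$ may contain long flat edges on which $\kappa_\varphi$ is nonlocal and the Cahn-Hoffman field $N$ is constant before turning sharply at the endpoints. Mollification introduces rounded corners there whose $\varphi_\eps$-curvature is a genuinely new contribution to estimate; one must verify that these rounded corners are geometrically close to arcs of $\partial(R'W_{\varphi_\eps} + y)$ for suitable centers $y$, so that the inscribed-ball argument still applies. This forces a careful joint choice of the mollification scale $\sigma$ and the anisotropy parameter $\eps$, and is the reason why $C'$ must be strictly larger than $C$ in the statement.
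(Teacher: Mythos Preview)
Your reduction via Lemma~\ref{lemmadue} is the right one and matches the paper: the task is to produce smooth $u_\eps$ satisfying locally the $R'W_{\varphi_\eps}$-condition with $R'<R=1/C$. The gap is in the construction.

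Mollification of $u$ (or of its signed distance) does not, in general, yield a curve satisfying the $R'W_{\varphi_\eps}$-condition. When $\varphi$ is non-smooth the curve $u$ may have genuine corners where the tangent jumps; the local $RW_\varphi$-condition holds there only because $\partial W_\varphi$ has a matching corner. Mollifying at scale $\sigma$ replaces each corner of $u$ by an arc whose shape is dictated by the convolution kernel, with Euclidean curvature of order $1/\sigma$. For the \emph{inner} $R'W_{\varphi_\eps}$-condition a translate of the smooth, uniformly convex body $R'W_{\varphi_\eps}$ must be inscribed and tangent to this arc, which forces the curvature profile of $\partial(R'W_{\varphi_\eps})$ near its own rounded corner to dominate that of the mollified arc. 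There is no reason these profiles should be compatible: the former is determined by the approximating anisotropy $\varphi_\eps$, the latter by your mollifier. Your assertion that ``mollification perturbs normal directions by $O(\sigma)$'' fails exactly at corners, where the normal is discontinuous to begin with; the inscribed-ball persistence argument you give is only valid on the $C^{1,1}$ part of $u$. You correctly flag this as the main obstacle in your last paragraph, but ``one must verify'' is precisely the content of the lemma, and it is not clear that it can be verified for a generic mollification.

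The paper's construction is different in kind: it does not mollify $u$. It covers $u(S^1)$ by finitely many local Lipschitz graphs, extends each to a global graph, and applies the envelope construction of \cite[Lemma~1]{chambolle13}, i.e.\ takes the boundary of the union of all translates $p+R'W_{\varphi_\eps}$ contained in the subgraph (and then repeats from the supergraph side). Corners of $u$ are thus replaced by arcs of $\partial(R'W_{\varphi_\eps})$ itself, so the inner $R'W_{\varphi_\eps}$-condition holds by design, and the outer one is inherited from $u$ via $W_{\varphi_\eps}\subset W_\varphi$. The real work is then the gluing of these local envelopes across overlapping charts, which requires choosing transition points $y_i$ where $N$ is differentiable with $\partial_s N\ne 0$ and distinguishing the flat-edge case from the strictly convex case. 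A convolution is used only at the very end to upgrade the resulting $C^{1,1}$ curve to $C^\infty$; at that stage no corners remain and the curvature bound is stable.
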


\begin{proof}
The main idea is to localize \cite[Lemma 1]{chambolle13}, where the result is proved for an embedded curve $u$ which is the boundary of a set $E$. The argument employed in \cite[Lemma 1]{chambolle13} is to take the union of all the Wulff shapes of type $x+RW_{\varphi_{\eps}}$, $R=1/C$, contained in the set $E$ (think for instance of  the situation where $E$ is a "thick"  $\sqcup$-shaped set and $\varphi$ is the $L^\infty$-norm). 
The boundary of this new set $\tilde{E}_{\eps}$ essentially regularizes the curve $u$ (``from inside'') attaching arcs of Wulff shapes  $RW_{\varphi_{\eps}}$ where $\partial E$ has  corners. 
Repeating the same  operation on the complementary $\tilde{E}_{\eps}^{c}$ one smoothes out also the remaining corners.
We shall adapt this method to the case of a general immersed curve, by exploiting the fact that the curve is locally a Lipschitz graph. Roughly speaking, the idea is to consider portions   $u([x-\delta, x+\delta])$ of the curve $u$, extend them with straight lines in order to obtain 
a global graph, 
and then perform the construction mentioned above.

\noindent{\it Step 1: covering $u(S^1)$ with graphs.}
Note that the curve $u$ is locally a Lipschitz graph. We cover $u(S^1)$ by $u([x_i - \alpha_i, x_i + \beta_i])$ such that
\begin{enumerate}[(i)]
 \item $ u(S^{1})= \cup_{i=1}^{N} u(I_{i})$ where $I_{i}=(x_{i}-\alpha_{i}, x_{i}+ \beta_{i}),$
 \item $u|_{[x_{i}-\alpha_{i}, x_{i}+ \beta_{i}]}$ is a graph and $u$ is differentiable at $x_i - \alpha_i$ and $x_i+\beta_i$,
 \item The intersection of the images of two consecutive graphs $u(I_{i}) \cap u(I_{i+1})$ contains a neighborhood of a point $y_i = u(w_i)$ such that $u$ and  the Cahn-Hoffman vector field $N$ are differentiable at $w_i$ and $\partial_s N(w_i) \neq 0$. In addition, if $y_i$ belongs to (the closure of) a straight line, we require that this line is entirely included in $u(I_{i}) \cap u(I_{i+1})$.
\end{enumerate}
This is possible since $N$ is Lipschitz and therefore differentiable almost everywhere and since every region of zero anisotropic curvature can be included in a graph. Indeed, in such regions, the Cahn-Hoffman vector field $N$ is constant. Therefore, since we have $\frac{\nu(x) }{\varphi^\circ (\nu (x))} \in \partial \varphi (N(x))$ (where $\nu(x)$ denotes the locally oriented  Euclidean normal to $u$ at $x$) and since at a point $N_{const}$ the subdifferential $\partial \varphi (N_{const})$ comprises at most a segment of finite length (recall that the duality map maps a unit ball into its dual ball), the normal $\nu$ is forced to remain in a non-flat cone and the region to remain a graph.

\noindent{\it Step 2: smoothing construction for graphs.} We prolongate each $u(I_i)$ by attaching two half lines with slope $\partial_x u(x_i-\alpha_i)$ and $\partial_x u(x_i + \beta_i)$. We then obtain the graph $\Gamma_f$ of a Lipschitz function $f$ with $u(I_{i}) \subset \Gamma_{f}$.
From our assumptions on $u$ and Lemma~\ref{lemmauno}, it follows that $\Gamma_{f}$ satisfies the $RW_\varphi$-condition (indeed, if this were not the case we get a contradiction by using \cite[Lemma 8.2]{bellettini012}).  Therefore also the $R'W_\varphi$-condition is satisfied for any $0< R' < R$. In the next step, we will choose a suitable $R' < R$ needed to glue together our local constructions. We now apply the result in \cite[Lemma 1]{chambolle13} to the subgraph $S_{f}$ of $f$, with $R'$ instead of $R$, and obtain a regularized set 
$$S_{x_{i},\eps} := \bigcup \left \{ (p+R'W_{\varphi_\eps}) \; \middle | \; (p + R'W_{\varphi_{\eps}}) \subset S_{f} \right\}.$$
Moreover  by construction, $S_{x_{i},\eps}$ satisfies the inner $R' W_{\varphi_\eps}$-condition and the outer  $R' W_{\varphi}$-condition.\\
Recalling that $\varphi_{\eps}\ge \varphi$, we have that 
the set $\partial S_{x_{i},\eps}\setminus \Gamma_{f}$ is a union of arcs of
 Wulff shape $R'W_{\varphi_{\eps}}$.
 We now define the curve $u_{\eps}^{-}$ by replacing $u(I_{i})
\setminus \partial S_{x_{i},\eps}$ by these arcs of Wulff shape.
 We show next  
that  the construction is compatible in some subregion of  $u(I_{i})\cap u(I_{i+1})$ so that the approximation curves $u_{\eps}^{-}$ of two adjacent local graphs can be  well connected/glued across this subregion.

\noindent{\it Step 3: connecting the approximate graphs.}
Let us now prove the main claim of Lemma \ref{lem1}, that is the constructions of $S_{x_i,\eps}$ and $S_{x_{i+1},\eps}$ are compatible: they can be connected in a canonical way. To do so, we will take advantage of the point $y_i$.

First, notice that the points $y \in u(S^1)$ where $u$ and $N$ are differentiable and $\partial_{s} N \neq 0$ are of two kinds. Either $y$ belongs to a flat edge of $u(S^1)$ around which $u$ is locally convex, which is parallel to an edge of the Wulff shape $RW_\varphi$ and whose length is bigger than the corresponding edge of $RW_\varphi$ (recall for instance Remark~\ref{rem3}), or $u$ is not flat around $y$ and the Wulff shape $p+RW_\varphi \subset S_{f}$ which touches $\Gamma_f$ at $y$ from inside  is not flat either around $y$.

We now prove that for $\eps$ small enough, $S_{x_{i},\eps}$ and $S_{x_{i+1},\eps}$ coincide around $y_i$, and therefore can be connected. We first deal with the case $y_i$ belongs to a flat edge $\ell$ of $u(S^1)$. Thanks to our assumptions $u(I_i) \cap u(I_{i+1})$ contains the whole $\ell$. The smoothing procedure leaves a part of this line unchanged (because of the reduction of $R$ into $R'$) when building $S_{x_{i},\eps}$ as well as $S_{x_{i+1},\eps}$, and the two curves can therefore be connected (see Figure \ref{fig:Wcrys}).

\begin{figure}
\begin{center}
\includegraphics[width=0.7\textwidth]{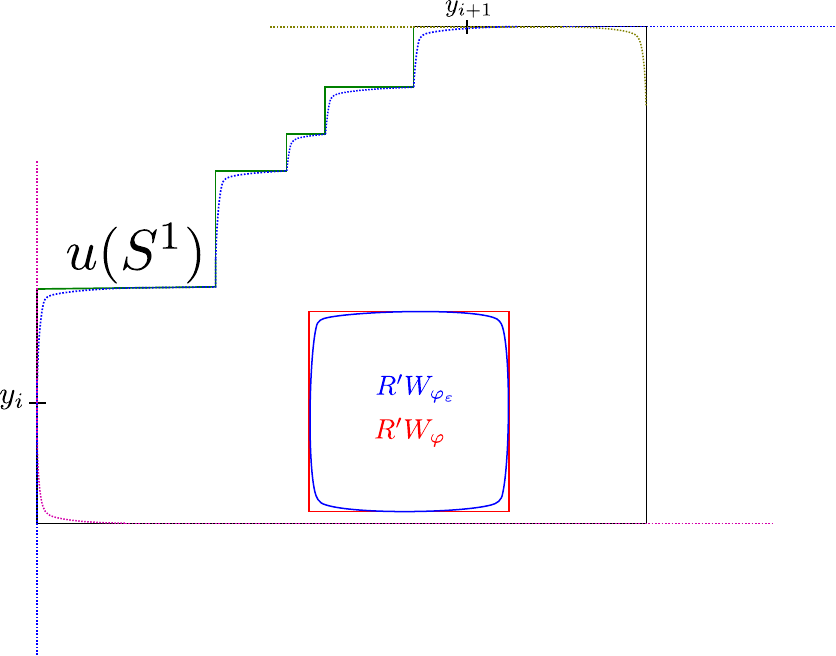}
\end{center}
\caption{Typical situation when $y_i$ belongs to a flat edge of $u(S^1)$.}
\label{fig:Wcrys}
\end{figure}

\begin{figure}
\begin{center}
\includegraphics[width=0.6\textwidth]{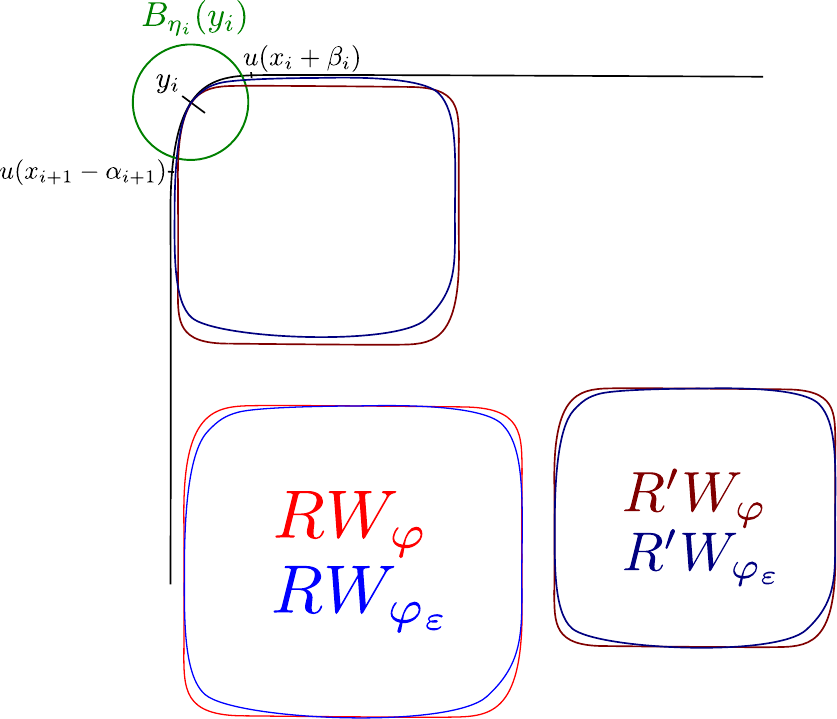}
\end{center}
\caption{Typical situation when $y_i$ belongs to a non-flat part of $u(S^1)$.}
\label{fig:Wsmooth} 
\end{figure}

Now, let us assume that $y_i$ belongs to a non-flat part of $u(S^1)$ (see Figure \ref{fig:Wsmooth}). Then, the curve $u$ at $y_i$ has a contact with a Wulff shape $p_{i }+RW_\varphi$, which cannot be flat around $y_i$ either. (In fact $u(S^{1})$ and $\partial (p_{i }+RW_\varphi)$ might even coincide in  a neighborhood of $y_{i}$; however the Wulff shape $\partial (p_{i }+RW_\varphi)$ is locally strictly convex at $y_{i}$.) As a result, the contact between $u(S^1)$ and the Wulff shape $p_{i}+R'W_\varphi$ takes place at $y_i$ only.
Let $\eta_{i}>0$ be such that $(B_{\eta_{i}}(y_i)\cap u(I_{i})) \subset u(I_i) \cap u(I_{i+1}) $.
 Since the approximate Wulff shape $R'W_{\varphi_\eps}$ Hausdorff converges to $R'W_\varphi$, it must touch $u(S^1)$ inside the ball of radius $\eta_{i}$, provided $\eps$ is small enough (depending on $\eta_{i}$, but these are in finite number anyway). Hence, every graph which coincides with $u(S^1)$ around $y_i$ is smoothed similarly on $B_{\eta_{i}}(y_i)$. We can therefore connect $S_{x_{i},\eps}$ and $S_{x_{i+1},\eps}$.

Eventually, we can connect all these pieces of graphs and obtain a curve $u_{\eps}^{-}$ which is locally a Lipschitz graph, and satisfies  locally  the inner $R' W_{\varphi_\eps}$-condition and the outer  $R' W_{\varphi}$-condition, where outer (resp. inner) means in the direction of the local orientation given by the vector $\nu$
(resp. $-\nu$). Notice also that, by construction, the Hausdorff distance between 
$u(I_{i})$ and $u_{\eps}^{-}(I_{i})$
is bounded above by the Hausdorff distance between $R' \partial W_{\varphi}$ and 
$R' \partial W_{\varphi_{\eps}}$, hence in particular it goes to zero as $\eps$ goes to zero.\\
On the other hand the ``added arcs'' $u(I_{i}) \setminus S_{x_{i}, \eps}$ which constitute parts of  $u_{\eps}^{-}$, locally satisfies the $R' W_{\varphi_{\eps}}$ condition, which is equivalent to say  that these arcs are $\varphi_{\eps}$-regular with $|\kappa_{\varphi_{\eps}}| \leq C'$, $C' =1/R'$ (see Lemma~\ref{lemmadue}). This means that the added arcs are of  class $C^{1,1}$ (and must admit a good local parametrization) and by construction concave (with respect to the local orientation). In particular their length is controlled above by the length of the replaced piece of curve and below by the distance of the endpoints of the arcs, and the length of $u_{\eps}^{-}$ tends to the length of $u$ as $\eps \to 0$.

\noindent{\it Step 4: building an actual curve.} 
We have explained so far how to construct geometrically  the approximation $u_{\eps}^{-}$. However we want some control of its parametrization too, since we claim uniform convergence of the approximate curves $u_{\eps}$ towards $u$. This can  be done by exploiting Dini's theorem on each local graph $u(\bar{I}_{i})$ and using the fact that we have a finite number of them.

We proceed similarly with $u^-_\eps$ instead of $u$ and by smoothing on the other side of the curve to build our approximation $u_\eps$, i.e we perform the analogous construction on the curve $u_{\eps}^{-}$ (which has the same orientation of $u$)
 by considering  the envelope of the Wulff shapes $R' W_{\varphi_{\eps}}$ that are locally \emph{above} $u_{\eps}^{-}$.

Finally, by a convolution argument we can assume that the curves $u_{\eps}$ 
are of class $C^{\infty}$ and satisfy  $|\kappa_{\varphi_{\eps}}| \leq C'$, with $C'> C$ arbitrarily close to $C$.
\end{proof}

\begin{rem}\label{remlength}\rm
Note that by construction the lengths of the curves $u_{\eps}(S^{1})$ are uniformly bounded from above and below.
\end{rem}


\section{Existence of $\varphi$-regular flows}

\subsection{Smooth and elliptic anisotropies}\label{asecsmooth}
Goal of this section is to show that if we have a bound of type $|\kappa_{\varphi_{\eps}}(0)| \leq C'$ for  initial smooth curves $u_{\eps}(0)$, then we can find a time interval (independent of $\eps$) where the anisotropic curve shortening flows \eqref{acsf} associated to the anisotropies $\varphi_{\eps}$ and initial data $u_{\eps}(0)$ exist.

In this section, for simplicity of notation  
we drop the index $\eps$ and write  $\varphi$ instead of  $\varphi_\eps$. In other words we assume $\varphi \in C^{\infty}(\R^{2} \setminus \{0 \})$  to be an elliptic anisotropy. 

First of all we  collect some important properties of the  anisotropic curvature flow \eqref{acsf} (or equivalently \eqref{ACSFF}).  For analogous results in the isotropic case see for instance \cite{CNV}.

In what follows it is important to keep in mind  that \eqref{ell-cond2} holds, hence $\psi \geq \tilde{C}$ (recall~\eqref{defpsi}). 

\begin{theo}{\cite[Theorem 3.1]{angenent90}}
There exists a smooth solution to \eqref{acsf} on $[0,T).$
\end{theo}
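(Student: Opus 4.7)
The plan is to reduce the geometric flow $u_t = \psi(\theta)\kappa\nu$ to a scalar quasilinear uniformly parabolic PDE on $S^1$, and then invoke standard parabolic existence theory. The key observation is that, even though the PDE $u_t = \psi(\theta)\kappa\nu$ is only degenerate parabolic as a system on the map $u$ (because tangential reparametrizations leave the image invariant), its geometric content is governed by a strictly parabolic scalar equation once we break the reparametrization gauge.

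First I would represent the evolving curve as a normal graph over the smooth initial datum: write $u(x,t) = u_{0}(x) + w(x,t)\,\nu_{0}(x)$ for $(x,t) \in S^{1} \times [0,T)$ and $|w|$ small. A direct computation expresses the Euclidean tangent, normal, arclength element, angle $\theta$, and curvature $\kappa$ of $u(\cdot,t)$ as smooth functions of $(x,w,w_{x},w_{xx})$, with the dependence on $w_{xx}$ entering linearly through $\kappa$. Taking the normal component of the flow equation and using $u_{t}\cdot \nu = w_{t}\,(\nu_{0}\cdot\nu)$, together with $\nu_{0}\cdot\nu > 0$ for $w$ small, we obtain a scalar equation
\begin{equation*}
w_{t} = a(x,w,w_{x})\,w_{xx} + b(x,w,w_{x}),
\end{equation*}
with $a,b$ smooth in their arguments and $2\pi$-periodic in $x$, and with initial datum $w(\cdot,0)\equiv 0$.

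Next I would verify uniform parabolicity. A direct calculation shows that the principal symbol of the right-hand side of $u_{t} = \psi(\theta)\kappa\nu$ when linearized in the normal direction is precisely $\psi(\theta_{0}(x))$, so
\begin{equation*}
a(x,0,0) = \psi(\theta_{0}(x)) \geq \widetilde{C} > 0
\end{equation*}
by \eqref{ell-cond2} and the definition \eqref{defpsi}. By continuity, $a(x,w,w_{x}) \geq \widetilde{C}/2$ on a neighborhood of $\{w=0,\,w_{x}=0\}$, so the equation is uniformly parabolic as long as $\|w\|_{C^{1}}$ stays small.

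Given this, short-time existence of a classical solution $w \in C^{2+\alpha,1+\alpha/2}(S^{1}\times[0,T))$ follows from the standard theory for quasilinear parabolic equations on a compact manifold (see, e.g., the Schauder fixed-point / linearization scheme for one-dimensional quasilinear parabolic PDEs in Lunardi or Ladyzhenskaya--Solonnikov--Uraltseva), applied on a time interval $[0,T)$ short enough to guarantee that $\|w\|_{C^{1}}$ remains in the neighborhood where $a \geq \widetilde{C}/2$. Once one has a $C^{2+\alpha,1+\alpha/2}$ solution, a standard bootstrap using Schauder estimates together with the smoothness of $\varphi^{\circ}$, $u_{0}$, and the coefficients $a,b$ promotes $w$ to $C^{\infty}$, and hence $u$ is a smooth solution of \eqref{acsf} on $[0,T)$.

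The main obstacle is verifying the uniform parabolicity of the reduced scalar equation and carefully computing the principal symbol: one has to keep track of how $\varphi^{\circ}(\nu)\kappa_{\varphi}$ expands in terms of $w_{xx}$, and confirm that the ellipticity condition on $\varphi$ (equivalently, the lower bound $\psi \geq \widetilde{C}$) yields a strict lower bound on the leading coefficient at $w=0$. Beyond that, the existence and smoothness steps are essentially an application of the established theory for quasilinear parabolic PDEs on $S^{1}$.
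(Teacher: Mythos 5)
Your argument is essentially correct, but it is worth noting that the paper does not prove this statement at all: it is quoted verbatim from Angenent (\cite[Theorem~3.1]{angenent90}), whose result is considerably more general (curves on surfaces, initial data with merely $p$-integrable curvature) and is obtained by a more delicate analysis than is needed here. For the situation actually used in Section~\ref{asecsmooth} --- smooth elliptic $\varphi$ and smooth initial curve --- your self-contained reduction is the standard and perfectly adequate route: writing $u=u_0+w\,\nu_0$, extracting the scalar quasilinear equation for $w$, checking $a(x,0,0)=\psi(\theta_0(x))/|u_{0,x}(x)|^2\geq c>0$ from \eqref{ell-cond2}, and applying quasilinear parabolic theory plus Schauder bootstrapping. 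The one step you gloss over is that the normal-graph ansatz only matches the \emph{normal} component of \eqref{acsf}: since $u_t=w_t\nu_0$ is in general not parallel to the evolving normal $\nu$, your $u$ solves $u_t\cdot\nu=\varphi^\circ(\nu)\kappa_\varphi$ but carries a spurious tangential velocity. To obtain a solution of \eqref{acsf} itself (purely normal velocity, as written) you must compose with a time-dependent family of diffeomorphisms of $S^1$ solving the ODE that cancels the tangential part; this is routine because $\nu$, $\theta$, $\kappa$ are reparametrization-invariant, but it should be said. With that addendum your proof is complete for the case the paper needs, and arguably more elementary than the cited source.
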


We start by deriving the evolution laws of  relevant geometric quantities.
\begin{lem}
\label{lemma2.1}
The following holds
\begin{align}\nonumber
\partial_t \partial_s (\cdot) &= \partial_s \partial_t (\cdot) +\psi(\theta) \kappa^2 \partial_s (\cdot)
\\\nonumber
\tau_t &= (\psi(\theta) \kappa)_s \nu
\\\nonumber
\nu_t & = -(\psi(\theta) \kappa)_s  \tau 
\\
\label{k_t}
\kappa_t &= (\psi(\theta) \kappa)_{ss} + \psi(\theta) \kappa^3 
\\\nonumber
\theta_t &=(\psi(\theta) \kappa)_s . 
\end{align}
\end{lem}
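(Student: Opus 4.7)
The plan is to derive all five identities from the single evolution law $u_t = \psi(\theta)\kappa\nu$ by a standard sequence of geometric computations, using the Frenet formulas $\tau_s = \kappa\nu$, $\nu_s = -\kappa\tau$ and the parametrization $\tau = (\sin\theta,-\cos\theta)$, $\nu = (\cos\theta,\sin\theta)$.

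First I would establish the commutator $[\partial_t,\partial_s]$, which is the foundation for everything else. Writing $V := \psi(\theta)\kappa$, compute
\begin{equation*}
\partial_t |u_x|^2 = 2u_x\cdot u_{xt} = 2|u_x|\,\tau\cdot\partial_x(V\nu) = 2|u_x|\,\tau\cdot(V_x\nu + V\nu_x),
\end{equation*}
and since $\tau\cdot\nu = 0$ and $\nu_x = |u_x|\nu_s = -|u_x|\kappa\tau$, this reduces to $\partial_t|u_x| = -V\kappa|u_x| = -\psi(\theta)\kappa^2|u_x|$. Plugging this into $\partial_t(|u_x|^{-1}\partial_x(\cdot))$ yields the first identity $\partial_t\partial_s = \partial_s\partial_t + \psi(\theta)\kappa^2\partial_s$.

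Next I would compute $\tau_t$ by applying this commutator to $u$:
\begin{equation*}
\tau_t = \partial_t u_s = \partial_s u_t + \psi(\theta)\kappa^2 u_s = \partial_s(V\nu) + \psi(\theta)\kappa^2\tau = V_s\nu + V\nu_s + \psi(\theta)\kappa^2\tau.
\end{equation*}
The two terms involving $\tau$ cancel (since $V\nu_s = -V\kappa\tau = -\psi(\theta)\kappa^2\tau$), leaving $\tau_t = V_s\nu = (\psi(\theta)\kappa)_s\nu$. For $\nu_t$, the constraints $|\nu|^2 = 1$ and $\nu\cdot\tau = 0$ force $\nu_t = -(\tau_t\cdot\nu)\tau = -(\psi(\theta)\kappa)_s\tau$. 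For $\theta_t$, simply differentiate $\tau = (\sin\theta,-\cos\theta)$ in time to get $\tau_t = \theta_t\nu$, and compare with the formula for $\tau_t$ just obtained, yielding $\theta_t = (\psi(\theta)\kappa)_s$. Finally, since $\kappa = \theta_s$, one more application of the commutator gives
\begin{equation*}
\kappa_t = \partial_t\theta_s = \partial_s\theta_t + \psi(\theta)\kappa^2\theta_s = (\psi(\theta)\kappa)_{ss} + \psi(\theta)\kappa^3.
\end{equation*}

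There is no serious obstacle here; the computation is entirely routine once the commutator is in hand. The only point requiring care is to perform the steps in the correct order: the commutator must be derived first (via $\partial_t|u_x|$), because every subsequent identity relies on moving a $\partial_t$ past a $\partial_s$. The rest is bookkeeping with the Frenet formulas and the fact that $V = \psi(\theta)\kappa$ is a scalar function of arclength that is differentiated purely through $\theta$ and $\kappa$.
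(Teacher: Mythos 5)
Your proof is correct and follows essentially the same route as the paper: derive the commutator from $\partial_t|u_x|=-\psi(\theta)\kappa^2|u_x|$, then obtain $\tau_t$, $\nu_t$, $\theta_t$ and $\kappa_t$ by applying it together with the Frenet formulas. The only (harmless) variation is that you get $\kappa_t$ by applying the commutator to $\kappa=\theta_s$ after establishing $\theta_t$, whereas the paper computes $\kappa_t$ directly from $\kappa=\tau_s\cdot\nu$; both are routine and equivalent.
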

\begin{proof}
Let $f : S^1 \to \R^2$, $f=f(x,t)$. Then, we compute (note that the derivatives in $x$ and $t$ can commute)
\begin{align*}
\partial_t \partial_s f &= \partial_t \left( \frac{\partial_x f}{|u_x|}  \right) = - \frac{\scal{u_{tx}}{u_x}}{|u_x|^3} \partial_x f + \frac{\partial_t \partial_x f}{|u_x|} \\
&= - \scal{\partial_s(\psi(\theta) \k \nu)}{\tau} \partial_s f + \frac{\partial_x}{|u_x|} \partial_t f = \psi(\theta)  \k^2 \partial_s f + \partial_s \partial_t f.
\end{align*}  
Applying this formula to the other quantities, we get
$$\tau_t = \partial_t (\partial_s u) = \partial_s (\partial_t u) + \psi(\theta) \k^2 \partial_s u = \partial_s(\psi(\theta) \k \nu) + \psi(\theta) \k^2 \tau = (\psi(\theta) \k)_s \nu.$$
We prove the third formula similarly. \\
Writing $\k = \scal{\tau_s}{\nu}$, we get
$$\k_t = \scal{\partial_t \tau_s}{\nu} + \scal{\tau_s}{\partial_t \nu}.$$
Since $\tau_s$ is proportional to $\nu$ and $\nu_t$ is proportional to $\tau$, the second term vanishes. We obtain
$$ \k_t = \scal{\partial_s ((\psi(\theta) \kappa)_s \nu) + \psi(\theta) \k^2 \tau_s}{\nu} = (\psi(\theta) \k)_{ss} + \psi(\theta) \k^3.$$
Finally, recalling that $\nu = (\cos \theta,\sin\theta)$, one has $\nu_t = -\theta_t (\sin \theta, -\cos \theta)$, which implies the last formula.
\end{proof}

We now show that we can find a time interval  where the anisotropic curvature does not blow up, provided we know that its values at the initial time are bounded by a constant $C^{'}$. This result holds for \emph{any} smooth elliptic anisotropy $\varphi$ whose associated anisotropic curvature $\k_\varphi$ is bounded by $C'$ on the initial curve.

\begin{prop}\label{propK}
Suppose $\|\kappa_{\varphi}(0)\|_{L^{\infty}(S^{1})} \leq C'$. Then the anisotropic curvature~$\kappa_{\varphi}$ remains bounded on any time interval $[0,T ]  \subset [0,\frac{1}{2C'^2})$ (provided the flow exists on this time interval). Precisely we have
 $$\| \kappa_{\varphi}(t) \|_{L^{\infty}(S^{1})} \leq \frac{C'}{\sqrt{1-2T\|\kappa_{\varphi}(0)\|_{L^{\infty}(S^{1})}^{2} }}. $$
\end{prop}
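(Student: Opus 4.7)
The plan is to apply Hamilton's trick (the parabolic maximum principle for suprema of solutions) to the quantity $\kappa_\varphi^2$. For this I first need an evolution equation for $\kappa_\varphi$ itself, which I will derive from Lemma~\ref{lemma2.1}. Writing $\kappa_\varphi = a(\theta)\kappa$ with $a(\theta):=\phi(\theta)+\phi''(\theta)$, so that $\psi=\phi a$ by \eqref{defpsi} and $a=D^2\varphi^\circ(\nu)\tau\cdot\tau$ by \eqref{a1} (cf.\ Remark~\ref{rem3}), a direct computation using $\theta_t=(\psi\kappa)_s$ and \eqref{k_t} gives
\[
\partial_t\kappa_\varphi \;=\; a'\kappa(\psi\kappa)_s \;+\; a(\psi\kappa)_{ss} \;+\; a\psi\kappa^3.
\]

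The core of the argument is to simplify this expression at a spatial critical point of $\kappa_\varphi$. There $(\kappa_\varphi)_s=a'\kappa^2+a\kappa_s=0$ forces $\kappa_s=-a'\kappa^2/a$, whence $a\kappa_{ss}=(\kappa_\varphi)_{ss}-a''\kappa^3+3(a')^2\kappa^3/a$. Substituting these together with the standard expansions $(\psi\kappa)_s=\psi'\kappa^2+\psi\kappa_s$ and $(\psi\kappa)_{ss}=\psi''\kappa^3+3\psi'\kappa\kappa_s+\psi\kappa_{ss}$, then using $\psi=\phi a$ and the defining identity $a=\phi+\phi''$, all contributions involving $\phi',a',\phi'',a''$ should cancel pairwise; only the term $\kappa^3(\phi+\phi'')a^2=\kappa^3a^3=\kappa_\varphi^3$ survives, plus the remainder $\psi(\kappa_\varphi)_{ss}$. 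The outcome is the clean anisotropic analogue of the isotropic identity $\kappa_t=\kappa_{ss}+\kappa^3$, namely
\[
\partial_t\kappa_\varphi \;=\; \kappa_\varphi^3 \;+\; \psi\,(\kappa_\varphi)_{ss}
\qquad \text{wherever } (\kappa_\varphi)_s=0.
\]
I expect this cancellation to be the main technical obstacle: the computation is elementary but has to be organized carefully to see all the terms in $\phi',a',\phi'',a''$ collapse through $a=\phi+\phi''$.

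With this identity in hand the conclusion is standard. Let $M(t):=\|\kappa_\varphi(\cdot,t)\|_{L^\infty(S^1)}^2$ and, at each $t$, pick $s^*$ attaining the spatial maximum of $\kappa_\varphi^2$. Then $(\kappa_\varphi)_s(s^*,t)=0$ and $\kappa_\varphi(\kappa_\varphi)_{ss}(s^*,t)\le 0$; multiplying the identity above by $2\kappa_\varphi$ and using $\psi\ge\widetilde C>0$ from \eqref{ell-cond2} yields $\partial_t(\kappa_\varphi^2)(s^*,t)\le 2M(t)^2$. Hamilton's trick then gives $M'(t)\le 2M(t)^2$ in the Dini sense, and integrating the ODE $m'=2m^2$ with $m(0)=M(0)\le\|\kappa_\varphi(0)\|_\infty^2$ produces $M(t)\le M(0)/(1-2tM(0))$ on $[0,1/(2M(0)))$. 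Since $M(0)\le C'^2$, this interval contains $[0,1/(2C'^2))$; taking square roots and bounding the resulting numerator $\sqrt{M(0)}=\|\kappa_\varphi(0)\|_\infty$ by $C'$ delivers the announced estimate.
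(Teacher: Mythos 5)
Your proposal is correct and follows essentially the same route as the paper: both derive the evolution of $\kappa_\varphi=(\phi+\phi'')\kappa$ from Lemma~\ref{lemma2.1}, verify that after using $\psi=\phi(\phi+\phi'')$ all the terms in $\phi'$, $\phi''$, etc.\ collapse so that at a spatial critical point only $\psi(\kappa_\varphi)_{ss}+\kappa_\varphi^3$ survives, and then apply the maximum principle of \cite[Lemma~2.1.3]{mantegazza} to $\kappa_\varphi^2$ to get $g'\le 2g^2$ and the stated bound. The only cosmetic difference is that the paper records the cancellation as a global identity for $(\partial_t-\psi\partial_{ss})\tfrac{\kappa_\varphi^2}{2}$ with the first-order terms written as a multiple of $\partial_s\tfrac{\kappa_\varphi^2}{2}$, whereas you perform it only at critical points of $\kappa_\varphi$; both versions are correct and yield the same conclusion.
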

\begin{proof}
First of all recall that due to \eqref{a1}  we have $\kappa_\varphi = \kappa (\phi + \phi'').$ For simplicity we denote by $h$ the quantity $\phi + \phi''$. By Lemma~\ref{lemma2.1} we infer that
\begin{align*}
 \partial_t \kappa_\varphi &= \partial_t( \kappa h) = h[(\partial_{ss}(\psi(\theta) \k) + \kappa^3 \psi ]+ \k h'\partial_s(\psi(\theta) \k ) \\
& = h[\partial_s(\k^2 \psi' + \k_s \psi) + \k^3 \psi  ]+ \k h' (\k^2 \psi'+\k_s\psi) \\
&= h(3\k \k_s \psi' + \k^3 \psi'' + \k_{ss} \psi+ \k^3\psi) +\k^3 h' \psi' + \k \k_s h' \psi
\end{align*}  
and
$$\partial_{ss} \kg  = \partial_{ss} (\k h) = \partial_s(\k_s h + \k^2 h') = \k_{ss} h +3\k_s \k h' + \k^3 h''.$$
Noting that
$$(\partial_t - \psi \partial_{ss}) \frac{\kg^2}{2} = \kg \partial_t \kg - \psi \kg \partial_{ss} \kg - \psi (\partial_s \kg)^2$$
we get  
\begin{align*}
 (\partial_t - \psi \partial_{ss})& \frac{\kg^2}{2} = -\psi \kg \left[ \k_{ss} h + 3 \k_s \k h' + \k^3 h''\right] - \psi(\partial_s \kg)^2 \\
&\quad + \kg \left[ h(3\k \k_s \psi' + \k^3 \psi'' + \k_{ss} \psi + \k^3\psi) +\k^3 h' \psi' + \k \k_s h' \psi \right] \\
&= \k_s\kg \k (3h\psi' -2h'\psi) + \kg \k^3(h\psi + h\psi''+h'\psi' - \psi h'') - \psi(\partial_s \kg)^2.
\end{align*} 
Now, note that since
$$\psi' = (h\phi)' = h' \phi + h \phi',$$
$$\psi'' = h'' \phi + 2 h'\phi' + h \phi'',$$
we obtain
$$3h\psi' - 2 h'\psi = 3h^2\phi'+ h h' \phi$$
and 
$$h\psi+ h\psi''+h'\psi' - \psi h'' = h^2 \phi + hh''\phi +2hh'\phi' + h^2\phi'' + (h')^2 \phi +hh'\phi' - hh''\phi = h^3+3hh'\phi' + (h')^2\phi.$$
As a result, 
$$(\partial_t - \psi \partial_{ss}) \frac{\kg^2}{2} =  \k_s \kg \k(3h^2\phi'+ h h' \phi)+\kg \k^3( h^3+3hh'\phi' + (h')^2\phi) - \psi (\partial_s \kg)^2.$$  
Since
$$\partial_s \frac{\kg^2}{2} =\kg(\k_s h + \k^2 h'),$$
we can write 
$$\k_s \kg \k  (3h^2 \phi' +hh' \phi) + \k^3 \kg (3hh' \phi' + (h')^2 \phi) = (3 \k h \phi' + h' \k \phi)  \partial_s \frac{\k_\varphi^2}{2}$$ 
which yields
$$(\partial_t - \psi \partial_{ss}) \frac{\kg^2}{2} \ls (3 \k h \phi' + h' \k \phi  )  \partial_s \frac{\k_\varphi^2}{2}
 + \k_\varphi^4. $$

At a maximal point for $\kg^2$, the quantity $\partial_s \kg^2$ vanishes and $\partial_{ss} \kg^2$ is nonpositive. As a result, letting $g:=\max\limits_{S^1} \kg^2$ and by \cite[Lemma~2.1.3]{mantegazza}, we have $\frac{d}{dt} g \ls 2 g^2$, which implies

\begin{equation}g(t) \ls \frac{g(0)}{1-2tg(0) }\label{noblow}\end{equation} 
 as long as $1-2t g(0) >0$. Since by assumption $g(0) \ls (C')^2$, the anisotropic curvature $\kg$ cannot blow up on a time interval $[0,T ]  \subset [0,\frac{1}{2C'^2})$.
\end{proof}

The rest of this section is devoted to showing that if the maximal time of existence of the flow is finite, then both the isotropic and anisotropic curvatures have to blow-up.

For the evolution of the derivatives of the curvature we have
\begin{lem}
\label{lemma2.2}
For $j \in \mathbb{N}$, $j \geq 1$ we have
\begin{align}
\partial_t (\partial_s^j \kappa) &=\psi(\theta) (\partial_s^j \kappa)_{ss} + 
(j+3) \psi'(\theta) \kappa (\partial_s^j \kappa)_s  \notag\\
& \quad + P_{j}(\psi, \psi', \psi'', \kappa, \kappa_s) \partial_s^j \kappa + Q_{j}(\psi, \psi', \ldots, \psi^{(j+2)}, \kappa, \ldots, \partial_s^{j-1} \kappa)
\end{align}
where $P_{j}(\cdot)$ and $Q_{j}(\cdot)$ are polynomials in the given variables and $\psi^{(m)}=\partial_\theta^m \psi$.
\end{lem}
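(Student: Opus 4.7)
The plan is to proceed by induction on $j$, driven by two facts from Lemma~\ref{lemma2.1}: the commutator relation $\partial_t \partial_s = \partial_s \partial_t + \psi(\theta)\kappa^2\partial_s$, and the identity $\theta_s = \kappa$, which via the chain rule converts each spatial derivative of $\psi$ into $\partial_s\psi^{(m)}(\theta) = \psi^{(m+1)}(\theta)\kappa$. Together these allow every $\partial_s$ applied to the $j$-th formula to be turned into a purely algebraic expression in $\psi^{(m)}$, $\kappa$, and its higher $s$-derivatives.

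For the base case $j=1$, I would write $\partial_t\kappa_s = \partial_s \kappa_t + \psi\kappa^2\kappa_s$ and substitute $\kappa_t = (\psi\kappa)_{ss} + \psi\kappa^3$ from \eqref{k_t}. Direct expansion gives
\begin{equation*}
(\psi\kappa)_{sss} = \psi\kappa_{sss} + 4\psi'\kappa\kappa_{ss} + 6\psi''\kappa^2\kappa_s + 3\psi'\kappa_s^2 + \psi'''\kappa^4, \qquad (\psi\kappa^3)_s = \psi'\kappa^4 + 3\psi\kappa^2\kappa_s.
\end{equation*}
Grouping by the order of $s$-derivative of $\kappa$ reproduces the claimed form with $P_1 = 6\psi''\kappa^2 + 4\psi\kappa^2 + 3\psi'\kappa_s$ and $Q_1 = (\psi' + \psi''')\kappa^4$, both in the required function class.

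For the inductive step I would apply $\partial_s$ to the $j$-th formula and reinvoke the commutator to get $\partial_t\partial_s^{j+1}\kappa = \partial_s(\partial_t\partial_s^j\kappa) + \psi\kappa^2\partial_s^{j+1}\kappa$. The two highest derivative levels combine cleanly: $\partial_s[\psi(\partial_s^j\kappa)_{ss}] = \psi(\partial_s^{j+1}\kappa)_{ss} + \psi'\kappa(\partial_s^{j+1}\kappa)_s$ and $\partial_s[(j+3)\psi'\kappa(\partial_s^j\kappa)_s]$ yields an additional $(j+3)\psi'\kappa(\partial_s^{j+1}\kappa)_s$, producing exactly the asserted coefficient $(j+4)\psi'\kappa$. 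Collecting everything proportional to $\partial_s^{j+1}\kappa$ gives a new $P_{j+1}$ assembled from $P_j$, $\psi\kappa^2$, and $(j+3)(\psi''\kappa^2 + \psi'\kappa_s)$; each summand is manifestly a polynomial in $\psi,\psi',\psi'',\kappa,\kappa_s$, so the required dependence of $P_{j+1}$ is preserved.

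The main obstacle is the bookkeeping for the residual $(\partial_s P_j)\partial_s^j\kappa + \partial_s Q_j$, which is to become $Q_{j+1}$. Differentiating $Q_j$ in $s$ raises one $\psi^{(m)}$ to $\psi^{(m+1)}$ (reaching at most $\psi^{(j+3)}$) and one $\partial_s^{j-1}\kappa$ to $\partial_s^j\kappa$, consistent with the claim. The delicate piece is $(\partial_s P_j)\partial_s^j\kappa$: because $P_j$ depends on $\kappa_s$, the chain rule generates $(\partial_{\kappa_s}P_j)\kappa_{ss}\cdot\partial_s^j\kappa$. For $j\ge 2$ the factor $\kappa_{ss}$ is of strictly lower order than $\partial_s^j\kappa$, so the product lands safely in $Q_{j+1}$. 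For $j=1$, however, this term equals $3\psi'\kappa_s\cdot\kappa_{ss} = 3\psi'\kappa_s\cdot\partial_s^{j+1}\kappa$, which has the same top-order factor as the $P_{j+1}\partial_s^{j+1}\kappa$ piece, and must be re-absorbed into $P_{j+1}=P_2$ by adding $3\psi'\kappa_s$ to it. Once this single edge case is flagged, the induction closes and the prescribed dependence of $P_j$ and $Q_j$ is maintained at every level.
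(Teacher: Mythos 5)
Your proof is correct and follows exactly the paper's route: induction on $j$ using the commutator $\partial_t\partial_s=\partial_s\partial_t+\psi\kappa^2\partial_s$ and the identity $\psi(\theta)_s=\psi'(\theta)\kappa$, with the same base-case expansion giving $P_1=6\psi''\kappa^2+4\psi\kappa^2+3\psi'\kappa_s$ and $Q_1=(\psi'+\psi''')\kappa^4$. The paper only sketches the induction step, so your explicit bookkeeping — in particular noticing that for $j=1$ the term $3\psi'\kappa_s\kappa_{ss}$ arising from $\partial_s P_1$ must be reabsorbed into $P_2$ rather than $Q_2$ — is a welcome completion of the argument.
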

\begin{proof}
The proof is by induction on $j$ and relies on Lemma~\ref{lemma2.1} and  the fact that $\psi(\theta)_s =\psi'(\theta) \kappa$.
For $j=1$ we have that 
\begin{align} \label{aiutino}
\partial_{t} \partial_{s} \kappa  & = \partial_{s} \partial_{t} \kappa
 + \psi(\theta) \kappa^{2} \partial_{s} \kappa\\
&= \partial_{s} (  (\psi(\theta) \kappa)_{ss} + \psi (\theta) \kappa^{3}) + \psi(\theta)\kappa^{2} \partial_{s} \kappa  \notag \\
& = \psi(\theta) \kappa_{sss} + 4 \psi'(\theta) \kappa \kappa_{ss} + \psi^{(3)}(\theta) \kappa^{4} + \psi' (\theta) \kappa^{4} + (6 \psi''(\theta) \kappa^{2} +4\psi(\theta) \kappa^{2} +3 \psi'(\theta) \kappa_{s}) \kappa_{s}. \notag
 \end{align}
 The induction step follows with similar arguments.
\end{proof}

\begin{lem}
Let $w:=\log |u_x|$. There holds
\begin{align}
w_t =-\psi(\theta) k^2.
\end{align}
In particular $\| u_x (t) \|_\infty \leq \| u_x (0)\|_\infty$.
\label{lemlip}
\end{lem}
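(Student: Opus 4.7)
The plan is a direct calculation using the evolution equation \eqref{ACSFF} together with the commutation formula from Lemma~\ref{lemma2.1}. First I would differentiate $w=\log|u_x|$ in time by writing
\begin{align*}
w_t=\frac{\partial_t|u_x|}{|u_x|}=\frac{\langle u_x,u_{xt}\rangle}{|u_x|^2}=\Bigl\langle\tau,\frac{\partial_x u_t}{|u_x|}\Bigr\rangle=\langle\tau,\partial_s u_t\rangle,
\end{align*}
exchanging the order of the space and time derivatives, which is legitimate in the smooth setting.

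Next I would plug in $u_t=\psi(\theta)\kappa\nu$ from \eqref{ACSFF} and expand, using the Frenet formula $\nu_s=-\kappa\tau$:
\begin{align*}
\partial_s u_t=\bigl(\psi(\theta)\kappa\bigr)_s\nu+\psi(\theta)\kappa\,\nu_s=\bigl(\psi(\theta)\kappa\bigr)_s\nu-\psi(\theta)\kappa^2\tau.
\end{align*}
Taking the scalar product with $\tau$ (and using $\nu\cdot\tau=0$, $|\tau|=1$) gives the desired identity $w_t=-\psi(\theta)\kappa^2$.

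Finally, for the $L^\infty$ bound I would invoke the ellipticity assumption which, via \eqref{ell-cond2} and \eqref{defpsi}, guarantees $\psi\geq\widetilde{C}>0$; since also $\kappa^2\geq0$, we obtain $w_t\leq 0$ pointwise in $(x,t)$. Integrating in time at a fixed $x\in S^1$ yields $w(x,t)\leq w(x,0)$, i.e.\ $|u_x(x,t)|\leq|u_x(x,0)|$, from which the stated bound $\|u_x(t)\|_\infty\leq\|u_x(0)\|_\infty$ follows immediately. There is no real obstacle here; the only point to be careful about is the validity of swapping $\partial_t$ and $\partial_x$, which is granted by the smoothness of the solution provided by Angenent's theorem.
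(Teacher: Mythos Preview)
Your argument is correct and follows the same direct computation as the paper's proof: both obtain $w_t=\tau\cdot\partial_s u_t=-\psi(\theta)\kappa^2$ via the Frenet relation $\nu_s=-\kappa\tau$, and then deduce the $L^\infty$ bound from $w_t\le 0$. The only cosmetic difference is that the paper invokes merely $\psi\ge 0$ for the second claim, whereas you cite the stronger ellipticity bound $\psi\ge\widetilde C>0$; either suffices.
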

\begin{proof}
A direct computation gives
\begin{align*}
w_t= \tau \cdot \partial_s u_t =\tau \cdot \psi(\theta) \kappa \nu_s =-\psi(\theta) k^2.
\end{align*}
The second statement follows from $\psi \geq 0$.
\end{proof}
Note that if we have a bound on the curvature, then from $-w_t \leq C(\|\kappa\|_\infty, \|\psi\|_\infty)$ we also infer that $|u_x(t)| \geq (\inf_{S^1} |u_x(0)|) e^{-C(\|\kappa\|_\infty, \|\psi\|_\infty) t}$.

\begin{lem}
\label{lemma2.4}
Assume that \eqref{acsf} has a smooth solution on $[0, \bar{t}]$, with $\bar{t}>0$. Then
$$\max_{S^1 \times [0, \bar{t}]} |\partial_s^j \kappa| \leq C_j, \qquad (j \in \mathbb{N}),$$
where $C_j$ depends on $\bar{t}$, $\tilde{C}$ (as in \eqref{ell-cond2}), $\| \psi^{(l)} \|_\infty$ for $l=0, \ldots, j+2$, $C_l$ for $l\leq j-1$,  
$\|\partial^j_s \kappa (0)\|_{\infty}$, and $\max_{S^1 \times [0, \bar{t}]}|\kappa|$.
\end{lem}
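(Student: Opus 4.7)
The plan is to proceed by induction on $j$, mimicking the maximum principle argument already used in the proof of Proposition~\ref{propK} but applied to $f := (\partial_s^j \kappa)^2$. The base case $j=0$ is trivial since $\max |\kappa|$ is part of the assumptions. Assuming bounds $C_0,\dots,C_{j-1}$ on the previous derivatives, I want to produce $C_j$ from the evolution equation of $\partial_s^j \kappa$ given in Lemma~\ref{lemma2.2}.

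The first step is to compute the parabolic equation satisfied by $f$. Using Lemma~\ref{lemma2.2} together with the identities $f_s = 2(\partial_s^j\kappa)(\partial_s^j\kappa)_s$ and $f_{ss} = 2((\partial_s^j\kappa)_s)^2 + 2(\partial_s^j\kappa)(\partial_s^j\kappa)_{ss}$, one obtains
\begin{equation*}
f_t - \psi(\theta)\, f_{ss} - (j+3)\psi'(\theta)\kappa\, f_s = -2\psi(\theta)\bigl((\partial_s^j\kappa)_s\bigr)^2 + 2P_j\, f + 2Q_j\,(\partial_s^j\kappa).
\end{equation*}
The crucial point is that the troublesome first-order term $(j+3)\psi'\kappa(\partial_s^j\kappa)(\partial_s^j\kappa)_s$ has been absorbed into a multiple of $f_s$.

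The second step is to apply a Hamilton-type maximum principle (as in \cite[Lemma~2.1.3]{mantegazza}, already invoked in the proof of Proposition~\ref{propK}). Setting $g(t) := \max_{S^1} f(\cdot,t)$, at a spatial maximum we have $f_s=0$ and $f_{ss}\leq 0$; using the ellipticity $\psi\geq\tilde C>0$ the terms $\psi f_{ss}$ and $-2\psi((\partial_s^j\kappa)_s)^2$ are both non-positive. Therefore
\begin{equation*}
\frac{d}{dt}g(t) \leq 2|P_j|\, g(t) + 2|Q_j|\sqrt{g(t)} \leq (2|P_j|+1)\, g(t) + |Q_j|^2.
\end{equation*}
By the inductive hypothesis and the given bound on $\|\kappa\|_\infty$, the polynomial $P_j(\psi,\psi',\psi'',\kappa,\kappa_s)$ is bounded (note $\kappa_s = \partial_s^1\kappa$ is controlled at the step after the base case, while for $j\geq 2$ it is already among the $C_l$'s with $l\leq j-1$; for $j=1$ one inspects \eqref{aiutino} to see that $P_1$ depends only on $\kappa$, $\psi$, $\psi'$). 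Similarly $Q_j$ is bounded by a constant depending only on $\|\psi^{(l)}\|_\infty$ for $l\leq j+2$ and on $C_0,\dots,C_{j-1}$.

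The final step is a Gronwall estimate: $g(t)\leq (g(0)+\bar t B)e^{A\bar t}$ with $A,B$ determined by the quantities listed in the statement, which gives $C_j$ as claimed. The only delicate point I foresee is verifying that $P_1$ indeed does not require knowledge of $\kappa_{ss}$; this is read off directly from \eqref{aiutino}, where $\kappa_{ss}$ only appears inside the leading operator $\psi\,(\partial_s\kappa)_{ss}$ and multiplied by $4\psi'\kappa$ (the $(j+3)$ coefficient with $j=1$), so it is correctly encoded in the transport coefficient rather than in $P_1$.
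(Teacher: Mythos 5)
Your induction scheme works for $j\ge 2$ and is essentially the paper's argument in that range (the paper runs the maximum principle directly on $v=\partial_s^j\kappa$ with an exponential weight rather than on $v^2$, but the content is the same). The genuine gap is at $j=1$, and it is exactly the point you dismiss as unproblematic. You claim that ``$P_1$ depends only on $\kappa$, $\psi$, $\psi'$,'' but this is false: Lemma~\ref{lemma2.2} itself allows $P_j$ to depend on $\kappa_s$, and reading off \eqref{aiutino} one finds
\begin{equation*}
P_1=6\psi''(\theta)\kappa^2+4\psi(\theta)\kappa^2+3\psi'(\theta)\kappa_s ,
\end{equation*}
i.e.\ the expansion contains the genuinely quadratic term $3\psi'(\theta)\kappa_s^2$ (you focused on whether $\kappa_{ss}$ sneaks into $P_1$, which it does not, and overlooked the $\kappa_s$ in the last bracket of \eqref{aiutino}). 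Consequently, with $f=\kappa_s^2$ your differential inequality becomes $\tfrac{d}{dt}g\le C\,g^{3/2}+\dots$ rather than $C g+\dots$: the term $2P_1 f$ contributes $6\psi'\kappa_s^3=\pm 6\psi' f^{3/2}$, which has no useful sign since $\psi'$ changes sign. A Riccati-type inequality of this form only controls $g$ up to a time of order $g(0)^{-1/2}$, not on the whole interval $[0,\bar t]$, so Gronwall does not deliver the constant $C_1$ claimed in the statement. (Note that in the isotropic case $\psi'\equiv 0$ and the term disappears, which is why the naive argument works there.)

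The paper flags precisely this obstruction (``the conclusion of Lemma~\ref{lemma2.2} does not let us apply the same maximum principle'') and circumvents it by estimating the modified quantity $v=\partial_s(\psi^{3/2}(\theta)\kappa)$ instead of $\kappa_s$: a lengthy computation shows that this $v$ satisfies $v_t=\psi v_{ss}+a(s,v,v_s)$ with $a(s,v,0)\le c(|v|+1)$, the cubic terms having cancelled, so the maximum principle applies; the bound on $\kappa_s$ is then recovered from \eqref{eq:ks} together with the already established bound on $\kappa$. To repair your proof you need this (or an equivalent) device for the step $j=1$; once $C_1$ is available, your argument for $j\ge 2$ goes through as written, since there $\kappa_s$ is covered by the inductive hypothesis and $P_j$ is genuinely bounded.
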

\begin{proof} 
The proof goes by induction on $j$.
Let $v=\partial_s^j \kappa$. From Lemma \ref{lemma2.2} we know that
\begin{align*}
v_t =\psi(\theta) v_{ss} + (j+3) \psi'(\theta) \kappa v_s + P_{j}(\psi, \psi', \psi'', \kappa, \kappa_s) v + Q_{j}(\psi, \psi', \ldots, \psi^{(j+2)}, \kappa, \ldots, \partial_s^{j-1} \kappa)
\end{align*}
(where recall that  $\partial_s =\frac{1}{|u_x|} \partial_x$ and $ v_{ss}=\frac{1}{|u_x|^2}v_{xx} -   \frac{v_x}{|u_x|} \tau \cdot \frac{u_{xx}}{|u_x|^2} $). Together with $\psi (\theta) \geq \tilde{C}>0$ (the anisotropy is  elliptic) 
we obtain a parabolic quasilinear equation for which we can apply arguments given in \cite[Thm. 9.5]{lieberman05}. 

More precisely let us look at the case where $j \geq 2$. Without loss of generality we may assume that there exists a point in $  S^1 \times (0,\bar{t}]$ where $v$ attains a positive maximum (if not argue with $-v$). The map $v$ satisfies an equation of type
$$ 0=-v_t + \psi(\theta)v_{ss} + a(s,v,v_s)$$
where, in view of the induction hypothesis, we have that $a(s,v,0) \leq c(|v| +1) \leq \alpha |v| +\frac{\beta}{|v|} $ with positive  constants $\alpha$ and $\beta$ depending on $\| \psi^{(l)} \|_\infty$ for $l=0, \ldots, j+2$, $C_l$ for $l\leq j-1$, and $\max_{S^1 \times [0, \bar{t}]}|\kappa|$. Set $\lambda=-\alpha -1$. Suppose $P=(x,t) \in S^1 \times (0, \bar{t}]$ is a point in which
$m :=e^{\lambda t}v$ attains a positive maximum. Then $m_t = \lambda e^{\lambda t}v + e ^{\lambda t}v_t$ and at $P$ we have
$m_x=0$, $m_t \geq 0$ (thus $m_s=v_s=0$), $m_{xx}\leq 0$ (thus $m_{ss} \leq 0$, $v_{ss} \leq 0$). At $P$ (where $v >0$) we have
\begin{align*}
0 &= -v_t + \psi(\theta) v_{ss} + a(s,v,v_s) \leq-v_t + \alpha |v| + \frac{\beta}{|v|}
 = (\lambda +\alpha) v + \frac{\beta}{ v} = - v + \frac{\beta}{ v}.
\end{align*}
Thus $v(P) \leq \sqrt{\beta}$ and we infer that
$$ \sup_{S^1 \times [0, \bar{t}]} v\leq e^{(\alpha+1)\bar{t}}(\sqrt{\beta} + \sup_{S^1}v^+(0)).$$
Arguing in the same way with $-v$ instead of $v$,
we get a bound also on $v^-$ and therefore on $|v|$.

For the case $j=1$, the conclusion of Lemma \ref{lemma2.2} does not let us apply the same maximum principle (see the quadratic term appearing in \eqref{aiutino}). To cope with this difficulty, we will study the quantity $v:=\psi(\theta)^{\frac 32} \k$. To this aim, let us compute using Lemma~\ref{lemma2.1}
 (for simplicity we drop the dependence on $\theta$ in the formulas for $\psi$ and its derivatives) 
$$\partial_t (\psi^{3/2}(\theta) \k) = \frac 32 (\k^2 \psi' + \k_s \psi) \psi' \psi^{1/2}\k + \psi^{3/2} (3\k \k_s \psi' + \k^3 \psi'' + \k_{ss} \psi + \psi \k^3) $$
whereas
\begin{equation}\partial_s(\psi^{3/2} \k) = \frac 32 \k^2 \psi' \psi^{1/2} + \psi^{3/2} \k_s. \label{eq:ks}\end{equation}
Then, we have
\begin{align*}
 \partial_{ss} (\psi^{3/2} \k) &= 3 \k \k_s \psi' \psi^{1/2}+ \frac 32 \k^3 \psi'' \psi^{1/2} + \frac 34 \k^3 (\psi')^2 \psi^{-1/2} + \frac 32 \k \psi' \psi^{1/2} \k_s + \psi^{3/2} \k_{ss} \\
 &= \frac 92 \k \k_s \psi' \psi^{1/2} + \frac 32 \k^3 \psi'' \psi^{1/2} + \frac 34 \k^3 (\psi')^2 \psi^{-1/2} +\psi^{3/2} \k_{ss}
\end{align*}
and
\begin{align*}
 \partial_{sss} (\psi^{3/2} \k) &= \frac 92 \k_s^2 \psi' \psi^{1/2} + \frac 92 \k \k_{ss} \psi' \psi^{1/2} + \frac 92 \k^2 \k_s \psi'' \psi^{1/2} \\
 &+ \frac 94 \k^2 \k_s (\psi')^2 \psi^{-1/2} + \frac 92 \k^2 \k_s \psi'' \psi^{1/2} + \frac 32 \k^4 \psi''' \psi^{1/2} + \frac 34 \k^4 \psi'' \psi' \psi^{-1/2} \\
 &+ \frac 94 \k^2 \k_s (\psi')^2 \psi^{-1/2} + \frac 32 \k^4 \psi' \psi'' \psi^{-1/2} - \frac 38 \k^4 (\psi')^3 \psi^{-3/2} + \frac 32 \k \psi' \psi^{1/2} \k_{ss} + \psi^{3/2} k_{sss} \\
 &=\frac 92 \k_s^2 \psi' \psi^{1/2} + 6 \k \k_{ss} \psi' \psi^{1/2} + 9 \k^2 \k_s \psi'' \psi^{1/2} + \frac{9}{2} \k^2 \k_s (\psi')^2 \psi^{-1/2} \\ 
 &+ \frac 32 \k^4 \psi''' \psi^{1/2} + \frac 94 \k^4 \psi'' \psi' \psi^{-1/2} - \frac 38 \k^4 (\psi')^3 \psi^{-3/2}  + \psi^{3/2} k_{sss}.
\end{align*}
Recall Lemma~\ref{lemma2.1} and the calculations performed in Lemma~\ref{lemma2.2}, we also compute
\begin{align*}
 \partial_t(\partial_s(\psi^{3/2} \k)) &= 3 \k \k_t \psi' \psi^{1/2} + \frac 32 \k^2 (\psi' \k^2 + \k_s \psi) \psi'' \psi^{1/2} 
 \\ &+ \frac 34 \k^2 (\psi')^2 (\psi' \k^2 + \k_s \psi) \psi^{-1/2} + \frac{3}{2} (\psi' \k^2 + \k_s \psi) \psi' \psi^{1/2} \k_s + \psi^{3/2} \partial_t \k_s \\
 &=3 \k  \psi' \psi^{1/2} (3\k \k_s  \psi' + \k^3 \psi'' + \k_{ss} \psi + \psi \k^3)  \\
 &+ \frac 32 \k^2 (\psi' \k^2 + \k_s \psi) \psi'' \psi^{1/2}+ \frac 34 \k^2 (\psi')^2 (\psi' \k^2 + \k_s \psi) \psi^{-1/2} + \frac{3}{2} (\psi' \k^2 + \k_s \psi) \psi' \psi^{1/2} \k_s  \\
 &+\psi^{5/2} \kappa_{sss} + 4 \psi^{\frac{3}{2}} \psi' \kappa \kappa_{ss} + \psi^{\frac{3}{2}} \psi^{(3)} \kappa^{4} + \psi^{\frac{3}{2}}  \psi'  \kappa^{4} + \psi^{\frac{3}{2}} (6 \psi'' \kappa^{2} +4\psi \kappa^{2} +3 \psi'\kappa_{s}) \kappa_{s} \\
 &=9\k^2 \k_s  (\psi')^2 \psi^{1/2} + 3 \k^4 \psi' \psi'' \psi^{1/2} + 3 \k \k_{ss} \psi' \psi^{3/2} + 3\k^4 \psi' \psi^{3/2}   \\
 &+ \frac 32 \k^2 (\psi' \k^2 + \k_s \psi) \psi'' \psi^{1/2}+ \frac 34 \k^2 (\psi')^2 (\psi' \k^2 + \k_s \psi) \psi^{-1/2} + \frac{3}{2} (\psi' \k^2 + \k_s \psi) \psi' \psi^{1/2} \k_s  \\
  &+\psi^{5/2} \kappa_{sss} + 4 \psi^{3/2} \psi' \kappa \kappa_{ss} + \psi^{3/2} \psi^{(3)} \kappa^{4} + \psi^{3/2}  \psi'  \kappa^{4} + \psi^{3/2} (6 \psi'' \kappa^{2} +4\psi \kappa^{2} +3 \psi'\kappa_{s}) \kappa_{s} .
\end{align*}
As a result, we get
\begin{align*}
\partial_t(\partial_s(\psi^{3/2} \k))& = \psi \partial_{ss}(\partial_s(\psi^{3/2} \k))  +\k\psi' (\partial _s(\psi^{3/2} \k))_s \\
&+ \partial_s(\psi^{3/2} \k) \cdot P(\k,\psi,\psi^{-1},\psi', \psi^{''}) + Q(\k,\psi^{1/2},\psi^{-1/2},\psi^{3/2},\psi, \psi',\psi'',\psi^{(3)}),
\end{align*}
where $P$ and $Q$ are polynomials in the given variables.

Finally, as before, $v= \partial_s(\psi^{3/2} \k)$ satisfies an equation of the type
$$ v_t = \psi(\theta) v_{ss} + a(s,v,v_s),$$ where $a(s,v,0) \ls c(|v|+1)$ and we can conclude again that $v$ is bounded. This, thanks to \eqref{eq:ks} and the result for $j=0$ ($\k$ is bounded), implies that $\k_s$ is bounded and concludes the proof.
\end{proof}

\begin{prop}\label{protime}
Let $T$ be the maximal time of existence of \eqref{acsf} and assume that $T < \infty$. Then
\begin{align}\label{eqmax}
\limsup_{t \to T} \| \kappa\|_\infty =+\infty.
\end{align}
\end{prop}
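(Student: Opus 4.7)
The plan is to argue by contradiction: assume the maximal existence time $T<\infty$ but $\limsup_{t\to T}\|\kappa(t)\|_\infty < \infty$, and show this forces the flow to extend past $T$, contradicting maximality. Fix such a bound $\|\kappa(t)\|_\infty \leq K$ on $[0,T)$.

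First I would derive uniform control on the parametrization. Lemma \ref{lemlip} already gives $\|u_x(t)\|_\infty \le \|u_x(0)\|_\infty$, and the remark following Lemma \ref{lemlip} combined with the boundedness of $\psi$ and of $\kappa$ yields a uniform lower bound $|u_x(t,x)| \ge (\inf_{S^1}|u_x(0)|)\,e^{-C\bar t} > 0$ for all $t\in [0,T)$. Thus the chain rule conversion between $\partial_s$ and $\partial_x$ is controlled in both directions on $[0,T)$. Next, Lemma \ref{lemma2.4} (applied iteratively on $j$, using that $\psi$ is smooth on $S^1$ and elliptic, hence all $\|\psi^{(l)}\|_\infty$ are finite) gives $\max_{S^1\times[0,T)}|\partial_s^j\kappa| \le C_j$ for every $j\ge 0$, where $C_j$ depends only on $\bar t$, the $\psi^{(l)}$'s, $K$, and $\|\partial_s^j\kappa(0)\|_\infty$.

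Combining the two-sided bound on $|u_x|$ with the bounds on $\partial_s^j\kappa$, we obtain uniform bounds on all spatial derivatives $\partial_x^j u(\cdot,t)$ in $L^\infty(S^1)$ for $t\in [0,T)$. The evolution equation $u_t=\psi(\theta)\kappa\nu$ together with the bound on $\kappa$ gives $\|u_t\|_\infty \le \|\psi\|_\infty K$, hence $u(\cdot,t)$ is uniformly Cauchy as $t\to T$, and converges uniformly to some limit map $u(\cdot,T):S^1\to\R^2$. Moreover, using the equations in Lemma \ref{lemma2.1} and the bounds just obtained, one shows that $\partial_t\partial_x^j u$ is bounded on $S^1\times[0,T)$ for each $j$, so $\partial_x^j u(\cdot,t)$ is also uniformly Cauchy as $t\to T$; hence the limit curve $u(\cdot,T)$ is of class $C^\infty$, with $|u_x(\cdot,T)| \ge (\inf_{S^1}|u_x(0)|)\,e^{-CT} > 0$ (so it remains an immersion) and with $\|\kappa(T)\|_\infty \le K$.

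With $u(\cdot,T)$ a smooth immersed curve, I would then invoke Angenent's short-time existence result (cited just before Lemma \ref{lemma2.1}) with initial datum $u(\cdot,T)$ to obtain a smooth solution of \eqref{acsf} on $[T,T+\delta)$ for some $\delta>0$. Gluing this with the original solution on $[0,T)$ produces a smooth solution on $[0,T+\delta)$, contradicting the maximality of $T$. The main technical obstacle is establishing that the limit curve at $T$ is a bona fide smooth immersion — this is where the nondegeneracy of $|u_x|$ on $[0,T)$, which relies crucially on the curvature bound, is used to close the argument; once that is in place, the rest is a standard bootstrap and continuation argument.
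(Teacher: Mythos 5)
Your proposal is correct and follows essentially the same route as the paper: a uniform curvature bound yields two-sided control of $|u_x|$ (Lemma \ref{lemlip}), bounds on all $\partial_s^j\kappa$ (Lemma \ref{lemma2.4}), $C^\infty$ convergence of $u(\cdot,t)$ as $t\to T$, and then extension past $T$ contradicting maximality. The only step you gloss over is the passage from arc-length derivatives to $x$-derivatives, which requires uniform bounds on $\partial_x^m|u_x|$ (not just on $|u_x|$ and $|u_x|^{-1}$); the paper obtains these by differentiating the evolution equation $z_t=-\psi(\theta)\kappa^2 z$ for $z=|u_x|$ and running an induction with a Gronwall argument.
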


\begin{proof}
Assume that $|\kappa|$ is uniformly bounded for all $ t \in [0, T)$. Then the previous lemmas imply a uniform bound on $|u_x|$, $|u_x|^{-1}$ and  $|\partial_s^j \kappa|$.
Using \eqref{acsf} we can write
$$ u(x,t_{2})-u(x,t_{1}) = \int_{t_{1}}^{t_{2}}\varphi^\circ(\nu(x)) (D^{2}\varphi^\circ(\nu(x)) \tau(x) \cdot \tau(x) ) \k(x) \nu(x)  dt.$$
The bounds on $\k$ and on the anisotropy map imply that $u(x, t_{2})$ has a limit when $t_{2} \to T$. It remains to show that the convergence of $u(\cdot,t)$ is in fact in $\mathcal C^\infty.$ 
This is achieved by showing that we can get uniform bounds (in time) for all derivatives of the map $u$ in the original parametrization.

First of all note that for a function $h : S^1 \to \R$ we have   that 
\begin{align}\label{eqH}
\partial_x^m h - |u_x|^m \partial_s^m h=P_{m}(|u_x|, \ldots, \partial_x^{m-1} |u_x|, h, \ldots, \partial_s^{m-1} h),
\end{align}
where $P_{m} $ is a polynomial in the given variables. Therefore if we can show that uniform bounds hold for the derivatives of the length element $|u_{x}|$ then using Lemma~\ref{lemma2.4} we obtain bounds for the derivatives $|\partial_{x}^{m} \kappa|$ and $|\partial^m_x u_x|$ on $(0,T)$.

It remains to show that the derivatives of the length element $z =|u_x|$  stay bounded.
Differentiating the PDE for $z =|u_x|$, namely
$$ z_t=-\psi(\theta)\kappa^2 z,$$
we can write
\begin{equation}\label{blabla}
 (\partial_x^m z)_t= -\psi(\theta) \kappa^{2}\partial_x^m z +\sum_{i+j=m, j \leq m-1} c(i,j,m) \partial_{x}^{i} (\psi(\theta) \kappa^{2}) \partial_{x}^{j}z  
 \end{equation}
for some coefficients $c(i,j,m)$. Here we proceed by induction. Assuming that $|\partial_x^j z|$ is bounded up to order $m-1$, then equation \eqref{eqH} applied to $h=\psi(\theta) \kappa^{2} $ and Lemma~\ref{lemma2.4} give boundedness of the terms appearing in \eqref{blabla}, so that we infer
$$ (\partial_x^m z)_t \leq  -\psi(\theta) \kappa^{2} \partial_x^m z + c.$$
 A Gronwall argument yields then boundedness of $\|\partial_x^m z\|_{\infty}$ on $(0,T)$.

Having achieved  $\mathcal C^\infty$ convergence, we can now extend  $u$ past $T$, which gives a contradiction. The claim follows.
\end{proof}

Note that since $\kappa_{\varphi}=D^{2}\varphi^\circ(\nu) \tau \cdot \tau \kappa$ the previous proposition implies that also the anisotropic curvature blows-up if the maximal time of existence of the flow \eqref{acsf} is finite.

Similarly to the isotropic case we get a lower bound for the curvature as follows.

\begin{lem}
Let $T$ be the maximal time of existence of \eqref{acsf} and suppose $T < \infty$. Then
\begin{align}
\liminf_{t \to T} \sqrt{T-t} \| \kappa \|_{L^\infty} \geq \frac{1}{\sqrt{2 \alpha}}
\end{align}
where $\alpha= \max_{S^1} |\psi +\psi''|$.
\end{lem}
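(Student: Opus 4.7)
The plan is to derive an ODE inequality for the maximum of $\kappa^{2}$ along the flow, of the form $g' \leq 2\alpha g^{2}$, and then integrate it backward from the blow-up time. This mirrors the argument already used in Proposition~\ref{propK}, but applied to the Euclidean curvature $\kappa$ rather than to $\kappa_{\varphi}$, and controlling it via the quantity $\psi + \psi''$ instead of $\psi$ alone.

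First, starting from \eqref{k_t} and using $\theta_{s} = \kappa$, I would expand
$(\psi(\theta)\kappa)_{ss} = \psi''(\theta)\kappa^{3} + 3\psi'(\theta)\kappa\kappa_{s} + \psi(\theta)\kappa_{ss}$,
so that
\begin{equation*}
\kappa_{t} = (\psi(\theta)+\psi''(\theta))\kappa^{3} + 3\psi'(\theta)\kappa\kappa_{s} + \psi(\theta)\kappa_{ss}.
\end{equation*}
Multiplying by $\kappa$ and writing $g := \kappa^{2}$ (so that $g_{s} = 2\kappa\kappa_{s}$ and $g_{ss} = 2\kappa_{s}^{2} + 2\kappa\kappa_{ss}$) gives
\begin{equation*}
\tfrac{1}{2}(g_{t} - \psi(\theta)g_{ss}) = (\psi(\theta)+\psi''(\theta))\kappa^{4} + \tfrac{3}{2}\psi'(\theta)\kappa\, g_{s} - \psi(\theta)\kappa_{s}^{2}.
\end{equation*}

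Next, let $G(t) := \max_{S^{1}} g(\cdot,t) = \|\kappa(\cdot,t)\|_{\infty}^{2}$. By \cite[Lemma~2.1.3]{mantegazza}, $G$ is locally Lipschitz and its (a.e.) derivative is bounded above by $g_{t}$ evaluated at any maximum point. At such a point (assuming $G > 0$) one has $g_{s}=0$ and $g_{ss}\leq 0$, so the identity above together with the definition $\alpha := \max_{S^{1}}|\psi+\psi''|$ yields
\begin{equation*}
\tfrac{1}{2}G'(t) \leq (\psi+\psi'')\,\kappa^{4} \leq \alpha\, G(t)^{2},
\end{equation*}
that is $G' \leq 2\alpha G^{2}$. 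Rewriting this as $\frac{d}{dt}(-1/G) \leq 2\alpha$ and integrating from $t$ to $t'\in(t,T)$ gives
\begin{equation*}
\frac{1}{G(t)} \leq \frac{1}{G(t')} + 2\alpha(t'-t).
\end{equation*}

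Finally, I would invoke Proposition~\ref{protime}: since $T<\infty$, there is a sequence $t'_{n}\to T$ with $G(t'_{n})\to\infty$, so $1/G(t'_{n})\to 0$. Passing to the limit $n\to\infty$ in the previous inequality yields $1/G(t)\leq 2\alpha(T-t)$, i.e.\
\begin{equation*}
(T-t)\,\|\kappa(\cdot,t)\|_{\infty}^{2} \geq \frac{1}{2\alpha},
\end{equation*}
and taking $\liminf$ as $t\to T$ gives the claimed bound. The only mildly delicate point is the Lipschitz differentiation of $G$ and the justification of evaluating $g_{t}$ at a moving maximum point, but this is exactly the setting of \cite[Lemma~2.1.3]{mantegazza} already invoked in Proposition~\ref{propK}, so no new technical obstacle is expected.
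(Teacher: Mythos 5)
Your proof is correct and follows essentially the same route as the paper's: both compute the evolution of $\kappa^{2}$, drop the good terms at a spatial maximum, apply \cite[Lemma~2.1.3]{mantegazza} to get $G'\leq 2\alpha G^{2}$, and integrate up to a blow-up sequence provided by Proposition~\ref{protime}. The only cosmetic difference is that the paper integrates $1/(M+\delta)$ with $\delta>0$ to sidestep a possible division by zero, whereas you assume $G>0$ directly (which is harmless, since $\kappa$ cannot vanish identically on a closed curve).
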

\begin{proof}
Let $w:= \kappa^2$. Then from \eqref{k_t} we infer that
\begin{align*}
w_t & =\psi(\theta) w_{ss} +2(\psi(\theta) + \psi''(\theta))w^2 +
3\psi'(\theta)w_s \sqrt{w} \, \mbox{sign}(\kappa)- 2\psi(\theta)(k_s)^2\\
& \leq \psi(\theta) w_{ss} +2(\psi(\theta) + \psi''(\theta))w^2 +
3\psi'(\theta)w_s \sqrt{w}  \, \mbox{sign}(\kappa).
\end{align*}
Let $M(t):= \max_{S^1} w \geq 0$. Then using \cite[Lemma~2.1.3]{mantegazza} we infer
\begin{align*}
\frac{d}{dt} M(t) \leq 2|\psi + \psi'' | M^2(t) \leq 2 \alpha M^2(t) \leq 2 \alpha (M(t)+ \delta)^{2},
\end{align*}
where $\alpha= \max_{S^1} |\psi +\psi''|$ and $\delta>0$. Integrating on $[t,s] \subset [0,T)$ we obtain
$$ -\frac{1}{M(s)+\delta}+ \frac{1}{M(t)+\delta} \leq 2 \alpha (s -t).$$
Letting $s \to T$ along a sequence of times so that $M(s) \to \infty$ by the Proposition \ref{protime}, and  choosing $\delta$ arbitrary small we get the claim.
\end{proof}

\subsection{General anisotropies}\label{secgen}

We now show short time existence of $\varphi$-regular flows,
starting from a $\varphi$-regular initial curve.

\begin{theo}\label{limitnorm}
Let $u_0$ be a closed $\varphi$-regular curve. Then there exist $T>0$ and a 
$\varphi$-regular flow $u$ on $[0,T]$, with $u(0)=u_0$.
\end{theo}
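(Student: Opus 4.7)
The plan is to regularize both the anisotropy and the initial datum, apply the smooth existence theory developed in Section~\ref{asecsmooth}, and pass to the limit. Set $C:=\|\kappa_\varphi(u_0)\|_\infty$, which is finite by $\varphi$-regularity, and fix $C'>C$ together with $T>0$ such that $2T(C')^2<1$. By \cite[Lemma~3]{BCCN} pick smooth elliptic anisotropies $\varphi_\eps\gs\varphi$ with $\varphi_\eps\to\varphi$ locally uniformly; by Lemma~\ref{lem1} pick smooth $\varphi_\eps$-regular curves $u_{0,\eps}\to u_0$ uniformly with $\|\kappa_{\varphi_\eps}(u_{0,\eps})\|_\infty\leq C'$, parametrized proportionally to arc-length (this is possible since the lengths are uniformly controlled by Remark~\ref{remlength}). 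Angenent's theorem yields a smooth solution $u_\eps$ of \eqref{acsf} with anisotropy $\varphi_\eps$ on its maximal interval $[0,T_\eps)$; Proposition~\ref{propK} then gives the $\eps$-uniform estimate
\begin{equation*}
\|\kappa_{\varphi_\eps}(\cdot,t)\|_\infty\leq M:=\frac{C'}{\sqrt{1-2T(C')^2}}\quad\text{on }[0,\min\{T,T_\eps\}),
\end{equation*}
and Proposition~\ref{protime}, applied for each fixed $\eps$ (using the ellipticity of $\varphi_\eps$ to bound the Euclidean curvature $\kappa_\eps$ by $\kappa_{\varphi_\eps}$), forces $T_\eps\geq T$.

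The velocity bound $|u_{\eps,t}|=\varphi_\eps^\circ(\nu_\eps)|\kappa_{\varphi_\eps}|\leq C_0 M$ together with Lemma~\ref{lemlip} (which gives $|u_{\eps,x}(\cdot,t)|\leq|u_{\eps,x}(\cdot,0)|\leq L_0$) make $u_\eps$ uniformly Lipschitz on $S^1\times[0,T]$; Arzel\`a--Ascoli produces a subsequence $u_\eps\to u$ uniformly, with $u$ Lipschitz. The Cahn-Hoffman fields $N_\eps=D\varphi_\eps^\circ(\nu_\eps)$ satisfy $\|N_{\eps,s}\|_\infty=\|\kappa_{\varphi_\eps}\|_\infty\leq M$ and are uniformly bounded, so along a further subsequence $N_\eps\to n$ uniformly, with $\|n_s\|_\infty\leq M$. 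By Lemma~\ref{lemmauno} each $u_\eps(\cdot,t)$ locally satisfies the $(1/M)W_{\varphi_\eps}$-condition, and since $W_{\varphi_\eps}\to W_\varphi$ in Hausdorff distance, $u(\cdot,t)$ locally satisfies the $RW_\varphi$-condition for every $R<1/M$; in particular $u(\cdot,t)$ is locally a Lipschitz graph.

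The heart of the argument is to show that $n$ is a Cahn-Hoffman field for $u$ and that \eqref{eqn} holds in the limit; I use the distance formulation of Remark~\ref{remdflow}. Locally around $(\bar s,\bar t)$ the maps $F_\eps(s,d,t)=u_\eps(s,t)+dN_\eps(s,t)$ are uniformly bi-Lipschitz on a common space-time cylinder (thanks to $\|N_{\eps,s}\|_\infty\leq M$), so the inverses $(s_\eps,d_\eps)$ and the extensions $\widetilde N_\eps(x,t):=N_\eps(s_\eps(x,t),t)$ are uniformly Lipschitz in $(x,t)$. Hence $d_\eps\to d$ and $\widetilde N_\eps\to\widetilde n$ uniformly, with gradients converging weakly-$*$ in $L^\infty$. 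Stability of subdifferentials under the uniform convergence $\varphi_\eps^\circ\to\varphi^\circ$ yields $\widetilde n(x,t)\in\partial\varphi^\circ(\nabla d(x,t))$; restricting to $\{d=0\}$ and using the $0$-homogeneity of $\partial\varphi^\circ$ identifies $n$ as a Cahn-Hoffman field for $u$, so $u(\cdot,t)$ is $\varphi$-regular. Finally, \eqref{eqnd} for $u_\eps$, namely $d_{\eps,t}=\div\widetilde N_\eps+O(d_\eps)$, passes to the distributional limit ($d_{\eps,t}\rightharpoonup d_t$ and $\div\widetilde N_\eps\rightharpoonup\div\widetilde n$ weakly-$*$ in $L^\infty$, while $O(d_\eps)\to O(d)$ uniformly), yielding $d_t=\div\widetilde n+O(d)$ and hence \eqref{eqn} for $u$. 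The main obstacle is precisely this last step: \eqref{eqn} is nonlinear in $n$, so one must reformulate it as the linear distributional divergence of $\widetilde N_\eps$, which is legitimate thanks to the uniform Lipschitz regularity of $\widetilde N_\eps$ provided by the uniform $(1/M)W_{\varphi_\eps}$-condition on the approximating curves.
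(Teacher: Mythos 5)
Your overall strategy is exactly the paper's: regularize $\varphi$ and $u_0$, use Proposition~\ref{propK} and Proposition~\ref{protime} to get a uniform existence time, extract a limit by Arzel\`a--Ascoli, and pass to the limit in the distance-function formulation of Remark~\ref{remdflow}. However, the step you yourself single out as the heart of the argument contains a genuine gap. You claim that ``stability of subdifferentials under the uniform convergence $\varphi_\eps^\circ\to\varphi^\circ$'' yields $\widetilde n\in\partial\varphi^\circ(\nabla d)$. Subdifferential graphs are stable when the arguments converge \emph{strongly}, but here $\nabla d_\eps$ converges only weak-$*$ in $L^\infty$: the gradients $\nabla d_\eps$ are (normalizations of) the Euclidean normals $\nu_\eps$, and for a general (e.g.\ crystalline) $\varphi$ there is no uniform bound on the Euclidean curvature of $u_\eps$ --- only on the anisotropic curvature --- so the normals are not equicontinuous and the limit curve may have corners. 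Testing the subgradient inequality $\varphi_\eps^\circ(q)\ge \varphi_\eps^\circ(\nabla d_\eps)+\widetilde N_\eps\cdot(q-\nabla d_\eps)$ then requires passing to the limit in the product $\widetilde N_\eps\cdot\nabla d_\eps$ of two merely weakly converging sequences, which is not legitimate. This is precisely where the paper inserts an integration-by-parts (div--curl type) argument: since $\widetilde N_\eps\cdot\nabla d_\eps=\varphi_\eps^\circ(\nabla d_\eps)=1$, one writes
$$
\int_V \psi\, dx\,dt=\int_V \psi\,\widetilde N_\eps\cdot\nabla d_\eps\,dx\,dt
=-\int_V d_\eps\bigl(\widetilde N_\eps\cdot\nabla\psi+\psi\,\div\widetilde N_\eps\bigr)dx\,dt,
$$
and the right-hand side passes to the limit because $d_\eps\to d$ \emph{uniformly} while $\widetilde N_\eps$ and $\div\widetilde N_\eps$ converge weak-$*$; this gives $\widetilde n\cdot\nabla d=1$ a.e., which together with $\varphi(\widetilde n)\le 1$ and $\varphi^\circ(\nabla d)=1$ is equivalent to the inclusion \eqref{incN}. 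You need this (or an equivalent compensated-compactness device); the bare appeal to subdifferential stability does not close the argument.

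A secondary, related overclaim: you assert that the extensions $\widetilde N_\eps$ are uniformly Lipschitz in $(x,t)$ and hence converge uniformly. The spatial Lipschitz bound follows from $\|\kappa_{\varphi_\eps}\|_\infty\le M$, but a uniform-in-$\eps$ bound on $\partial_t N_\eps=D^2\varphi_\eps^\circ(\nu_\eps)\,\partial_t\nu_\eps$ would require controlling $(\psi_\eps\kappa_\eps)_s$ and $D^2\varphi_\eps^\circ$ uniformly in $\eps$, and neither is available (the higher-derivative bounds of Lemma~\ref{lemma2.4} degenerate with the initial data and the ellipticity constants of $\varphi_\eps$). The paper only uses uniform boundedness of $\widetilde N_\eps$ and weak-$*$ convergence in $L^\infty(V)$ (plus, for each fixed $t$, equi-Lipschitz continuity of $N_\eps(\cdot,t)$ in arc-length), and that is all the limiting argument above actually needs. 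The rest of your proposal --- the uniform curvature bound, the lower bound $T_\eps\ge T$ via Proposition~\ref{protime}, the equi-Lipschitz bounds from Lemma~\ref{lemlip}, and the persistence of the local $RW_\varphi$-condition --- is correct and matches the paper.
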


\begin{proof}
Let $\varphi_\eps \gs \varphi$ be a family of smooth and elliptic 
anisotropies converging to $\varphi$ as $\eps\to 0$.
Let $u_{\eps,0}$ be the approximations of $u_0$ described in Lemma~\ref{lem1}. 
Recalling Remark \ref{remlength},
up to a reparametrization, we can assume that $|(u_{\eps,0})_x|\le C$, so that the curves $u_{\eps,0}$ are equi-Lipschitz in $\eps$.

Denote by $u_{\eps}$ the solutions to \eqref{eqn} with initial data $u_{\eps,0}$,
that is, the functions $u_{\eps}$ solve the equation
\begin{align}\label{eqneps}
(u_{\eps })_{t}=\kappa_{\varphi_{\eps}} N_{\eps},
\end{align}
where $N_\eps = D\varphi^\circ_\eps(\nu_\eps)$ is the Cahn-Hoffman vector field.

In view of Lemma~\ref{lem1} and Proposition~\ref{propK}
the curves ${u}_{\eps}$ have 
curvature $\kappa_{\varphi_\eps}$  bounded by a constant $\Lambda$ which does not
depend on $\eps$ and $t$, as long as $t$ does not reach a certain $T$ which depends only on $u_0$. As a consequence, by Proposition \ref{protime}
we can assume that the solutions $u_\eps$ are all defined in the same time interval
$[0,T]$.
Our goal is to pass to the limit in $u_\eps$ as $\eps\to 0$. 
 
Thanks to Lemma \ref{lemlip} the maps $u_\eps$
are equi-Lipschitz in space. 
Moreover, by Proposition~\ref{propK} and \eqref{eqneps} the solutions $u_\eps$
are also equi-Lipschitz in time. Using Ascoli-Arzela Theorem  we can ensure that $u_\eps$ converge, up to a subsequence, to some Lipschitz function $u$ such that $u(0)=u_0$. We claim that $u$ is a $\varphi$-regular flow.

Fix $(\bar x, \bar t)\in S^1\times (0,T)$. By Remark \ref{remdflow}, in a neighborhood $V$
of 
$(u(\bar x,\bar t), \bar t)$ with 
$u(\bar x,\bar t)=\lim_\eps u_\eps(\bar x,\bar t)$ equation \eqref{eqneps} can be rewritten as
\begin{align}\label{eqndeps}
\widetilde N_\eps= D \varphi^\circ_{\eps}(\nabla d_\eps), \qquad
(d_\eps)_{t} = {\rm div} \widetilde N_\eps + O(d_\eps)
\qquad \text{a.e. in $V$.}
\end{align} 
where the functions $d_\eps,\,\widetilde N_\eps$ are defined 
as in Remarks \ref{remd} and \ref{remdflow}.
In particular, $d_\eps$ is the $\varphi_\eps$-distance function from the support of $u_\eps$,
restricted to a neighborhood of $(u(\bar x,\bar t), \bar t)$
 and $\widetilde N_\eps$ is a suitable extension of the Cahn-Hoffman field $N_\eps$. 

From the convergence of $u_\eps$ to $u$ we immediately get the uniform convergence 
of $d_\eps$ to some function $d$ in $V$. 
Notice that $d$ is the $\varphi$-distance function to the support of $u$, restricted to a neighborhood
of  $(u(\bar x,\bar t), \bar t)$. 
We now show that the fields $\widetilde N_\eps$ also 
converge to some field $\widetilde n$ which is the extension of 
a Cahn-Hoffman vector field $n$ for $u$.
By  the fact that the curvatures $\kappa_{\varphi_\eps}$
are uniformly bounded and the control on the lengths from above and below (Remark~\ref{remlength}, Lemma~\ref{lemlip} and comments below), it follows for every fixed time $t$ that $N_{\eps}(\cdot, t)$ are equi-Lipschitz with respect to arc-length and (up to a subsequence) converge uniformly to a Lipschitz field $n(s,t)$. Let $\widetilde n$ be the extension in $V$ (cf.  Remark~\ref{remdflow})  satisfying  $ n(s(x), t)=\widetilde n(s(x), t)= \widetilde n(x,t)$.
Since $\widetilde N_\eps$ are uniformly bounded in $V$, they converge, up to a subsequence in the weak* topology of $L^\infty(V)$, to the vector field $\widetilde n\in L^\infty(V)$. 
By $\varphi_{\eps} \to \varphi$ we infer that $\varphi(\widetilde n)\le 1$.
 Again using the uniform boundedness of the anisotropic curvatures $\kappa_{\varphi_\eps}$ and  \eqref{eqd} we infer that  $(\div \widetilde N_{\eps})$ weak*- converges to $(\div   \widetilde n)$ in $L^\infty(V)$.  
We can now pass to the limit in \eqref{eqndeps} and obtain that $d$ satisfies
\begin{align}\label{eqndd}
d_{t} = {\rm div} \widetilde N + O(d)
\qquad \text{a.e. in $V$.}
\end{align}
We now show that $\widetilde N$ satisfies the inclusion 
\begin{align}\label{incN}
\widetilde N\in \partial \varphi^o(\nabla d)\qquad \text{a.e. in $V$.}
\end{align}
Indeed, letting $\psi\in C^1_c(V)$ and recalling that 
$\widetilde N_\eps\cdot \nabla d_\eps = \varphi^o_\eps(\nabla d_\eps)=1$, we have
$$
\int_V \psi \,dx dt = \int_V \psi\, \widetilde N_\eps\cdot \nabla d_\eps\, dx dt
= - \int_V d_\eps \big( \widetilde N_\eps\cdot \nabla \psi + \psi {\rm div} \widetilde N_\eps
\big) dx dt.
$$
Passing to the limit in the right-hand side, we get
$$
\int_V \psi \, dx dt = -  \int_V d \big( \widetilde N\cdot \nabla \psi 
+ \psi {\rm div} \widetilde N \big) \,dx dt = 
\int_V \psi\, \widetilde N\cdot \nabla d\, dx dt,
$$
which is equivalent to \eqref{incN}.

It then follows that $u$ is a $\varphi$-regular flow on $[0,T]$,
which proves the thesis.
\end{proof}

The uniqueness of $\varphi$-flows for general anisotropies and initial 
data is still an open problem. However,
it has been proved in \cite{BeNo:99} that 
the evolution is unique
if the initial curve is embedded. 
Moreover, in the purely crystalline case, that is when $W_\varphi$ is a polygon
and when the initial curve is piecewise linear and $\varphi$-regular, 
the problem reduces to a family ODE's and the solution is therefore unique 
(see \cite{GiGu:96}).

\end{document}